\definecolor{cqcqcq}{rgb}{0.7529411764705882,0.7529411764705882,0.7529411764705882}
\definecolor{uuuuuu}{rgb}{0.26666666666666666,0.26666666666666666,0.26666666666666666}
\definecolor{qqccqq}{rgb}{0.0,0.8,0.0}
\definecolor{qqqqcc}{rgb}{0.0,0.0,0.8}
\definecolor{qqqqcc}{rgb}{0.,0.,0.8}
\definecolor{ffqqtt}{rgb}{1.,0.,0.2}
\renewcommand\@biblabel[1]{#1.}
\newtheorem{thm}{Theorem}
\newtheorem{lem}[thm]{Lemma}
\def\eps{\varepsilon}
\providecommand{\p}{\partial}
\def\fref#1{(\ref{#1})}
\providecommand{\dr}{\Delta \rho}
\providecommand{\dev}{\Delta V}
\newenvironment{bottompar}{\par\vspace*{\fill}}{\clearpage}
\begin{document}

\title{Existence and uniqueness of solutions for a model of non-sarcomeric actomyosin bundles}

\author{Stefanie Hirsch\footnote{Fakult\"at f\"ur Mathematik, Universit\"at Wien, Austria} , 
Dietmar \"Olz\footnote{Courant Institute of Mathematical Sciences, New York University, U.S.A.} , 
Christian Schmeiser$^*$}
\rhead{SOLUTIONS FOR MODEL OF ACTOMYOSIN BUNDLES}
\date{\vspace{-5ex}}
\maketitle
\begin{abstract}
The model for disordered actomyosin bundles recently derived in \cite{Oelz} 
includes the effects of cross-linking of parallel and anti-parallel actin filaments,
their polymerization and depolymerization, and, most importantly, the interaction
with the motor protein myosin, which leads to sliding of anti-parallel filaments
relative to each other. The model relies on the assumption that actin filaments are
short compared to the length of the bundle. It is a two-phase model which treats
actin filaments of both orientations separately. It consists of quasi-stationary force
balances determining the local velocities of the filament families and of transport
equations for the filaments. Two types of initial-boundary value problems are 
considered, where either the bundle length or the total force on the bundle are
prescribed. In the latter case, the bundle length is determined as a free boundary.
Local in time existence and uniqueness results are proven. For the problem with given
bundle length, a global solution exists for short enough bundles. For small prescribed
force, a formal approximation can be computed explicitly, and the bundle length 
tends to a limiting value.

\begin{bottompar}
\par\bigskip\noindent
{\sl Keywords\/}: actomyosin bundles; (free) boundary value problem; fixed-point methods
\par\smallskip\noindent
{\sl MSC \/}: 35Q92, 35N30, 92C40
\end{bottompar}

\newpage

\end{abstract}

\section{Introduction}
In many biological processes such as wound healing, muscle contraction, or cytokinesis, contraction of \textit{actomyosin bundles} plays a central role. Actin is a polar protein that forms filaments by polymerization. If filaments of myosin II, a motor protein, bind to two 'anti-parallel' actin filaments with opposing plus ends (also called \textit{barbed ends} as opposed to the so-called \textit{pointed} or minus ends), it moves along both filaments towards their respective barbed ends, thus sliding the filaments past one another and changing the length of the actomyosin ensemble \cite{Vicente}.

In sarcomeres, the highly structured subunits of muscle cells, contractility has been the focus of research for decades and is well understood by now \cite{Huxley}. The dynamics of non-sarcomeric actomyosin structures (and what the minimal requirements for contraction are), however, is a field of ongoing research. Non-sarcomeric bundles include the contractile ring in cytokinesis \cite{Carvalho}, stress fibres in motile cells such as fibroblasts \cite{Small}, or rear bundles in fish epidermal keratocytes \cite{Svitkina}. They typically consist of actin filaments of varying and changing lengths and 
of both orientations, 
moving relative to each other, interspersed with myosin II filaments, cross-linkers such as $\alpha$-actinin (a protein that binds to two anti-parallel filaments \cite{Courson}), and fascin (which connects actin filaments of the same orientation \cite{Jayo}). Length changes are typically due to depolymerization.

A one-dimensional model for the dynamics of such disordered actomyosin bundles has been derived by the second author \cite{Oelz}, taking into account
the aforementioned protein dynamics (for details and an overview of other modelling approaches see \cite{Oelz} and the references therein). Its derivation starts on a microscopic level and describes each actin filament by the movement of its plus and minus ends.

A force-balance equation accounts for the influences of myosin, of cross-linking, and possibly of external forces (e.g. due to linkages to the substrate). Assuming that proteins act as transient elastic springs that bind and unbind in the overlapping region of filaments, an average over these
effects can be described by friction \cite{MilOel,OelSch}. 
An effective interaction coefficient between two filaments is derived, where the length of the overlap is modified by a factor reducing the 
interaction probability in thick bundles.

A continuous description is introduced by replacing individual filaments by a position (of the filament center of gravity) and length dependent density function. The force-balance equation now becomes an integral equation, where the effective interaction coefficient involves integrals over all the actin
material within the overlapping region.

A significant simplification of the model is achieved by the assumption that individual filaments are short compared to the length of the bundle. In the 
corresponding asymptotic limit the force balance equations are localized and turn into two coupled elliptic equations for the quasistationary
velocities of left- and right-moving filaments. The effective interaction coefficients now depend on moments of the actin densities with respect to 
filament length. On the other hand, the velocities appear in transport equations for the densities.

This derivation of the model does not consider ends of the bundle. Considering the model for a bundle of finite, time-dependent length, two-point
boundary conditions for the velocities and inflow boundary conditions for the densities are needed. Two choices seem natural: If the total force
acting at the ends of the bundle is prescribed, its length has to be treated as an unknown, resulting in a free boundary problem. If, on the other hand,
the dynamics of the bundle length is prescribed, the applied force can be computed as post-processing. These scenarios consider the force-velocity
relationship  of the contractile bundle either as a function of force or
of contraction rate, respectively.
The aim of this study is to examine the solvability of both problems. 

We shall first introduce the mathematical model in Section \ref{mathmodel}, then transform both problems to a fixed boundary by a coordinate transform in Section \ref{assumstrat}, where we also specify reqirements on initial and boundary data. Furthermore we explicitly solve the problem for the case, 
where no forces act on the tips. In Section \ref{fbvp}, the free boundary problem with prescribed force is considered. Since in general there is no control  
of the bundle length, solutions can only be expected to exist locally in time. Similarly, the solution behaviour necessary for the well posedness of the
initial-boundary value problem can only be expected for small enough forces. We therefore prove a local-in-time existence and uniqueness result
under the assumption of a small force. In Section \ref{fxbvp}, the problem corresponding to the situation with prescribed bundle length is treated. 
A local existence and uniqueness result can be shown, which is very similar to the free boundary problem. Under the additional assumption of 
small enough bundle length solutions are proven to exist globally. The local existence results are proven by decoupling the problems for the filament
densities and for the velocities and by an analysis of the resulting fixed point operator. The main difficulty is to avoid the occurrence of regions, where
all filaments of one of the two families are completely decomposed. This definitely happens after some time, if the bundle becomes too long, which
explains the shortness assumption for the global existence result. Finally, in Section 6 we return
to the problem with prescribed force and investigate a situation, where global existence can
be expected. For small force and bundle length and for time independent boundary data,
a formal approximation of the solution can be computed explicitly. Under the assumption of
a contractive bundle and a pulling force, convergence to a steady state with finite bundle
length is obtained.

\section{The mathematical model}\label{mathmodel}

Let the bundle at time $t\ge 0$ be located in the $x$-interval $[0,X(t)]$, and let $\rho^+(t,x,l)$ and, respectively, $\rho^-(t,x,l)$ be the densities of 
actin filaments (with their pointed ends directed in the positive and, respectively, the negative $x$ direction) with respect to their centers
$x\in[0,X(t)]$ and to their lengths $l\ge 0$. Note that this is a two-scale model, where the length variables $x$ and $l$ vary on different scales. 
By $V^\pm(t,x)$ we denote the velocities of the filaments centered at $x$ (independent of filament
length because of the strong friction between co-localized filaments of the same family), and by the given constant $s_l>0$ the difference between the depolymerization and the polymerization speeds, i.e. depolymerization dominates and filaments become shorter with time.

By actin-myosin interaction, the plus-filaments are expected to move to the right relative to the minus-filaments. Concerning the ends of the bundle, 
we want to describe a situation, where plus-filaments enter the bundle from the left end and leave it at the right end, and vice versa for the 
minus-filaments, meaning 
\begin{equation}\label{vel-ass}
  V^+(x=0)>0, \quad V^+(x=X) > \dot X, \quad V^-(x=0)<0, \quad V^-(x=X) < \dot X . 
\end{equation}
This will be a consequence of assumptions
on the data formulated below.  If it holds, the following initial-boundary value problem for the densities can be expected to be well posed for given 
$V^\pm$ and $X$:
\begin{eqnarray}\label{eq:prob}
\begin{array}{l}     
    \p_t\rho^\pm+\p_x(V^\pm \rho^\pm) - s_l\,\p_l\rho^\pm = 0 \,,\\
    \rho^+(t,0,l)=\rho_0^+(t,l),\qquad    \rho^-(t,X(t),l)=\rho_1^-(t,l) \,, \\
    \rho^\pm(0,x,l) = \rho_I^\pm(x,l) \,,
\end{array}
\end{eqnarray}
where the boundary data $\rho_0^+$, $\rho_1^-$ and initial data $\rho_I^\pm$ are given.
This subproblem is coupled to the problem for the velocities:
\begin{eqnarray}\label{eq:ellipticfull}
\begin{array}{ l}     
    \p_x\left(D^\pm\p_xV^\pm\right) \pm C \left(\eta - V^+ + V^-\right) = 0 \,,\\
    V^+(t,0)= u^+_0(t) \,,\qquad \p_xV^+(t,X(t))=0 \,,\\
    \p_xV^-(t,0)=0 \,,\qquad  V^-(t,X(t))= \dot X(t) - u^-_1(t) \,,
\end{array}
\end{eqnarray}
where the differential equation is a force balance, with the first term (reminiscent of the viscous term in fluid models) describing friction caused by filaments
of the same family as a consequence of the building and breaking dynamics of connections by bundling proteins. The second term models the interactions
between the two families with the contributions $C\eta$ from actin-myosin interaction and $C(V^+ - V^-)$ from the cross-linking of antiparallel
filaments. The parameter $\eta>0$ measures the strength of the actin-myosin interaction relative to the cross-linking, and
the viscosity coefficients and the interaction strength coefficient between antiparallel filaments are given by
\begin{equation}\label{eq:coeff}
  D^\pm = \frac{D_0\mu_1^\pm\mu_3^\pm}{\mu_1^+  + \mu_1^-} \,,\quad
  C = \frac{C_0\mu_1^+\mu_1^-}{\mu_1^+ + \mu_1^-} \,,\quad  \mu_j^\pm(t,x) = \int_0^\infty l^j \rho^\pm(t,x,l)dl\,,\,\, j=1,3\,,
\end{equation}
with $D_0, C_0>0$. 
Note that both coefficients increase only linearly (instead of quadratically) with the local total filament length, taking into account that in a thick bundle
interaction between two filaments becomes less likely.

Plus-Filaments are pushed into the bundle at $x=0$ with speed $u^+_0>0$, and minus-filaments at $x=X$ with (relative) speed $u^-_1>0$. 
The absence of forces on the plus-filaments at $x=X$ and on the minus-filaments at $x=0$ are described by the homogeneous Neumann boundary conditions. 

Adding the differential equations in \fref{eq:ellipticfull} shows that the quantity
$$
  F := D^- \p_x V^- + D^+ \p_x V^+
$$
is independent of $x$. It can be interpreted as the total force acting on the ends of the bundle, pulling it apart when positive.
In the following, two different situations will be considered. On the one hand, for given bundle length $X(t)$, \fref{eq:prob}, \fref{eq:ellipticfull}, \fref{eq:coeff} 
is a closed system for the computation of $\rho^\pm, V^\pm$, and the computation of the force can be considered as post-processing. On the other
hand, the force 
\begin{equation}\label{eq:force}
  F(t) = D^-(t,X(t)) \p_x V^-(t,X(t)) 
\end{equation}
may be considered as given, and the bundle length $X$ as an unknown. In this case \fref{eq:force} provides an additional equation for the determination of the free boundary $X$, and its initial position $X(0) = X_0> 0$ has to be prescribed. 

\section{Preliminary Assumptions and Strategy}\label{assumstrat}

In this section, our strategy for proving well posedness of the free boundary problem \fref{eq:prob}--\fref{eq:force} for the determination of
$(\rho^+,\rho^-,V^+,V^-,X)$ will be presented. It will be carried out in detail in the following section. The last section will be concerned with
the simpler proofs for the case with prescribed bundle length, where only the main differences will be highlighted.

As a preliminary step, the problem is transformed to a fixed domain by introducing the new variable
\begin{equation}\label{eq:scaling}
 y:= \frac{x}{X(t)} \in [0,1] \,.
 \end{equation}
The proof of a local existence result will be based on a fixed point iteration on the triple $(\rho^+(t,y,l),$ $\rho^-(t,y,l),X(t))$. Given these quantities, the
first step is the computation of the coefficients
\begin{equation}\label{eq:coeff-y}
  D^\pm = \frac{D_0\mu_1^\pm\mu_3^\pm}{\mu_1^+  + \mu_1^-} \,,\quad
  C = \frac{C_0\mu_1^+\mu_1^-}{\mu_1^+ + \mu_1^-} \,,\quad  \mu_j^\pm(t,y) = \int_0^\infty l^j \rho^\pm(t,y,l)dl\,,\,\, j=1,3\,.
\end{equation}
Then the velocities $V^+(t,y),V^-(t,y)$ are computed from the rescaled subproblem
\begin{eqnarray}\label{eq:ellfreeb}
\begin{array}{ l}     
    0=\p_y\left(D^\pm\p_yV^\pm\right)\pm X^2 C\left(\eta-V^++V^-\right),\\
    V^+(y=0)=u_0^+, \qquad   \p_y V^+(y=1)=0,\\
    \p_y V^-(y=0)=0,\qquad      (D^- \p_y V^-)(y=1) = X F.\\
\end{array}
\end{eqnarray}
A new version of $X(t)$ can be obtained by integration:
\begin{equation}\label{eq:movingb}
  \dot X= V^-(y=1)+u_1^-\,,\qquad X(0)=X_0\,.
\end{equation}
Finally, the new version of $(\rho^+(t,y,l),\rho^-(t,y,l))$ can be computed from
\begin{eqnarray}\label{eq:rhofreeb}
 \begin{array}{ l}
   \p_t\rho^\pm+\frac{1}{X}\left(V^\pm-(V^-(y=1)+u_1^-)y\right)\p_y\rho^\pm-s_l\p_l\rho^\pm=-\frac{1}{X}\rho^\pm \p_yV^\pm,\\
    \rho^+(y=0)=\rho_{0}^+, \qquad \rho^-(y=1)=\rho_1^-,\\
       \rho^\pm(0,y,l)=\rho_I^\pm(X_0 y,l).
 \end{array}
\end{eqnarray}
It is instructive to consider the case $F=0$, where \fref{eq:ellfreeb} has the simple explicit solution
\begin{equation}\label{explicit-V}
  \bar V^+(t,y) = u_0^+(t) \,,\qquad \bar V^-(t,y) = u_0^+(t) - \eta \,. 
\end{equation}
The bundle length can also be computed explicitly:
\begin{equation}\label{explicit-X}
  \bar X(t) = X_0 + \int_0^t (u_0^+(s) + u_1^-(s) - \eta)ds \,.
\end{equation}
In view of \fref{vel-ass}, we now pose our assumptions on the boundary data for the velocities: We shall assume the existence of a positive constant 
$\delta\le \eta/2$, such that
\begin{equation}\label{BC-ass}
  0 < \delta \le u_0^+,\,u_1^- \le \eta-\delta \,,\qquad u_0^+,u_1^- \in C([0,\infty))\,.
\end{equation}
The problems for the densitites can then be solved by the method of characteristics. We introduce the age $\tau^+(y,t)\in [0,t]$ of a plus-filament
with position $y$ at time $t$, which has entered the bundle at $y=0$. Because of the positivity of $u_0^+$, it is determined uniquely by
$$
   \bar X(t)y = \int_{t-\tau^+(y,t)}^t u_0^+(s)ds \,.
$$
The density of plus-filaments is then given by
\begin{equation}\label{explicit-rho+}
  \bar\rho^+(t,y,l) = \left\{ \begin{array}{ll} \rho_I^+ \left( \frac{\bar X(t)}{X_0}y - \frac{1}{X_0} \int_0^t u_0^+(s)ds, l+s_l t \right) \,, & 
              \mbox{for }   \bar X(t)y > \int_0^t u_0^+(s)ds \,,\\
     \rho_0^+(t-\tau^+(y,t), l+s_l\tau^+(y,t)) \,,& \mbox{for }   \bar X(t)y < \int_0^t u_0^+(s)ds \,. \end{array} \right.
\end{equation}
Analogously, we obtain for the minus-filaments
\begin{equation}\label{explicit-rho-}
  \bar\rho^-(t,y,l) = \left\{ \begin{array}{ll} \rho_I^- \left( \frac{\bar X(t)}{X_0}(y-1) + 1 + \frac{1}{X_0} \int_0^t u_1^-(s)ds, l+s_l t \right) \,, & 
              \mbox{for }   \bar X(t)(1-y) > \int_0^t u_1^-(s)ds \,,\\
     \rho_1^-(t-\tau^-(y,t), l+s_l\tau^-(y,t)) \,,& \mbox{for }   \bar X(t)(1-y) < \int_0^t u_1^-(s)ds \,, \end{array} \right.
\end{equation}
where $\tau^-(y,t)\in [0,t]$ (the age of a minus-filament with position $y$ at time $t$, which has entered the bundle at $y=1$) is determined by
$$
   \bar X(t)(1-y) = \int_{t-\tau^-(y,t)}^t u_1^-(s)ds \,.
$$
The explicit solution \fref{explicit-V}, \fref{explicit-X}, \fref{explicit-rho+}, \fref{explicit-rho-} already reveals several obstacles for the 
existence of global solutions. Obviously, the length of the bundle \fref{explicit-X} might shrink to zero in finite time, but it might also grow above all bounds,
which then also holds for the ages $\tau^\pm$.
If, in this case, the boundary data for the densities have compact supports in terms of the filament length $l$, the filaments pushed in 
at one end might be completely depolymerized before reaching the other end. In other words, for certain $(t,y)$, $\rho^\pm(t,y,\cdot)=0$ 
will hold, whence the diffusivity $D^\pm(t,y)$ vanishes and the elliptic nature of the equation for $V^\pm$ is lost. In the following, this situation will be avoided by smallness assumptions on the data (although the bundle should be able to
support pulling forces as long as the supports of $D^+(t,\cdot)$ and $D^-(t,\cdot)$ overlap and their union covers $[0,1]$). 
In the following, we collect our boundedness, regularity, and compatibility assumptions on the data for the densities. We shall use the notation
$\Omega_T := (0,T)\times (0,1)$ and assume the existence of positive constants $\alpha_0\le \beta_0$, $\underline{L} < \overline{L}$, and $M$,
such that 
\begin{eqnarray}\label{rho-ass}  
\begin{array}{ll}
     \rho_0^+(t,l),\,\rho_1^-(t,l),\,\rho_I^\pm(y,l) \ge \alpha_0\,,\qquad &\mbox{for } (t,y,l)\in\Omega_\infty\times[0,\underline{L}] \,,\\
     \rho_0^+,\,\rho_1^-,\,\rho_I^\pm \le \beta_0\,,\qquad &\mbox{in } \Omega_\infty\times [0,\infty) \,,\\
     \rho_0^+ = \rho_1^- = \rho_I^\pm = 0\,,\qquad &\mbox{in } \Omega_\infty\times [\overline{L},\infty) \,, \\
     |\p_y \rho^\pm_I|,\, |\p_t \rho_0^+|,\, |\p_l \rho_0^+|,\, |\p_t \rho_1^-|,\, |\p_l \rho_1^-| \le M \,, \qquad &
            \mbox{in } \Omega_\infty\times [0,\infty) \,, \\ 
     \rho_0^+(0,l) = \rho_I^+(0,l) \,,\quad \rho_1^-(0,l) = \rho_I^-(1,l) \,,\qquad &\mbox{for } l\ge 0\,.
\end{array}  
\end{eqnarray}

\begin{center}
\begin{figure}
\includegraphics[scale=.2]{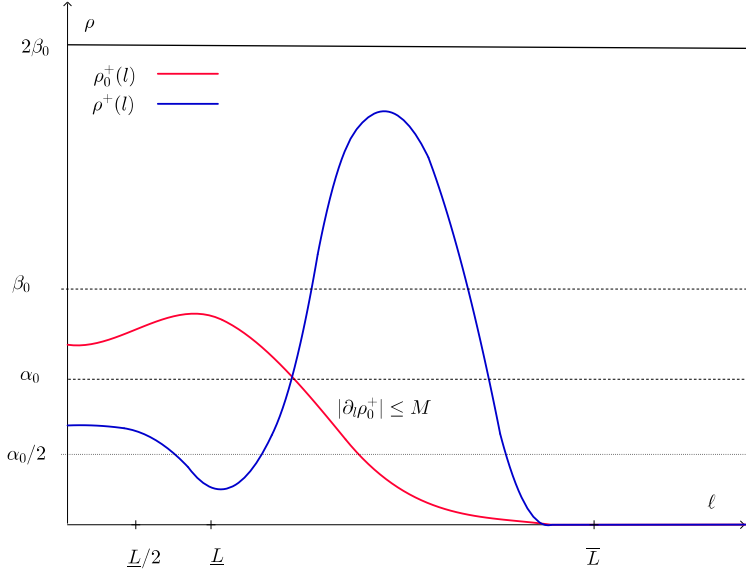}
\caption{The properties of $\rho^+\in R_{\rho,T}$ and the boundary datum $\rho_0^+$ as functions of $l$}
\end{figure}
\end{center}

The first of these assumptions guarantees that the complete depolymerization described above is avoided at least in the explicit solutions \fref{explicit-rho+},
\fref{explicit-rho-} for short times $t<\underline{L}/s_l$.

Another difficulty results from a possible violation of \fref{vel-ass} for large applied forces. The existence result in the following section will therefore 
be local in time under the additional assumption of small enough $F(t)$.

\section{Prescribed Force -- Local Existence and Uniqueness}\label{fbvp}
In this section, we consider the situation where the force $F(t)$ acting on the tips of the bundle is prescribed, and the problem 
\fref{eq:coeff-y}--\fref{eq:movingb} is expected to determine the coefficients $D^\pm,C$, the velocities $V^\pm$, the densities $\rho^\pm$, and the
bundle length $X$. We shall prove local existence and uniqueness of solutions for small enough $F$.

Collecting our assumptions on the data, we assume $C_0,D_0,\eta,s_l,X_0>0$, \fref{BC-ass}, and \fref{rho-ass}. When we say in the following that a
quantity depends on the data, it means that it depends on all the constants appearing in these assumptions. For a time $T>0$ to be chosen below,
the fixed point operator discussed in the previous section will act on the set $R_{\rho,T}^2 \times R_{X,T}$ with
\begin{align*}
  R_{\rho,T}:= \{ \rho\in L^\infty(\Omega_T\times(0,\infty)): \,& \rho(t,y,l)\ge\alpha_0/2 \mbox{ for } 0\le l\le \underline L/2,\, \rho^\pm(t,y,l)\le 2\beta_0 
     \mbox{ for } l\ge 0,\\
     & \rho^\pm(t,y,l)=0 \mbox{ for } l\ge \overline L,\, |\p_y\rho^\pm(t,y,l)| \le \gamma \mbox{ for }l\ge0\} \,,
\end{align*}
\[
  R_{X,T}:=\{X\in C([0,T]): \, X_0/2\leq X(t)\leq 2 X_0\} \,,
\]
where $\gamma>0$ depends on the data in a way defined below.

\begin{lem}\label{lem:CDfreeb}
Let, for an arbitrary $T>0$, $(\rho^+,\rho^-) \in R_{\rho,T}^2$. Then there exist $\underline\kappa, \overline\kappa, c>0$, depending on the data, 
such that $C$ and $D^\pm$, defined by \fref{eq:coeff-y}, satisfy 
\begin{equation} \label{CD-prop}
  \underline\kappa \le C,\,D^\pm \le \overline\kappa \,,\qquad |\p_y D^\pm|\le c \,,\qquad \mbox{in } \Omega_T\,.
\end{equation}
\end{lem}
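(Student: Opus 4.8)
The plan is to establish each of the three bounds in \fref{CD-prop} by tracking how the defining formulas \fref{eq:coeff-y} depend on the moments $\mu_1^\pm,\mu_3^\pm$, and then to show that membership in $R_{\rho,T}$ forces these moments to be bounded above and below, with controlled $y$-derivatives. The key observation is that the definitions
\[
  D^\pm = \frac{D_0\mu_1^\pm\mu_3^\pm}{\mu_1^+  + \mu_1^-}\,,\qquad C = \frac{C_0\mu_1^+\mu_1^-}{\mu_1^+ + \mu_1^-}
\]
are smooth (rational) functions of the four moments as long as the denominator $\mu_1^+ + \mu_1^-$ stays bounded away from zero, so the whole argument reduces to two-sided moment estimates plus a derivative estimate.

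First I would derive the two-sided bounds on the moments. For $\rho\in R_{\rho,T}$ the density is pinned below by $\alpha_0/2$ on $0\le l\le \underline L/2$ and bounded above by $2\beta_0$ with support in $l<\overline L$. Hence for $j=1,3$,
\[
  \mu_j^\pm = \int_0^\infty l^j\rho^\pm\,dl \ge \frac{\alpha_0}{2}\int_0^{\underline L/2} l^j\,dl =: m_j > 0\,,
  \qquad
  \mu_j^\pm \le 2\beta_0\int_0^{\overline L} l^j\,dl =: M_j\,.
\]
These give $0<m_j\le\mu_j^\pm\le M_j$ uniformly on $\Omega_T$. Feeding them into the formulas and using $2m_1\le \mu_1^++\mu_1^-\le 2M_1$ yields the desired $\underline\kappa\le C,D^\pm\le\overline\kappa$ with, for instance, $\underline\kappa = \min\{D_0 m_1 m_3/(2M_1),\,C_0 m_1^2/(2M_1)\}$ and a matching $\overline\kappa$. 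This part is routine once the moment bounds are in hand.

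Next I would bound $|\p_y D^\pm|$. Differentiating the quotient gives an expression in $\p_y\mu_1^\pm,\p_y\mu_3^\pm$ with the denominator $(\mu_1^++\mu_1^-)^2$, which is already bounded below by $4m_1^2>0$. The numerator involves the moments (already bounded) and their $y$-derivatives $\p_y\mu_j^\pm = \int_0^\infty l^j\,\p_y\rho^\pm\,dl$. Here the constraint $|\p_y\rho^\pm|\le\gamma$ from $R_{\rho,T}$, together with the support bound $l<\overline L$, yields $|\p_y\mu_j^\pm|\le\gamma\int_0^{\overline L}l^j\,dl$, so every ingredient is controlled and $|\p_y D^\pm|\le c$ follows with $c$ an explicit expression in the data and $\gamma$.

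The main obstacle, and the only genuinely subtle point, is the interchange of $\p_y$ with the integral defining $\mu_j^\pm$: one must be sure $\p_y\mu_j^\pm = \int l^j\p_y\rho^\pm\,dl$ is legitimate. This is justified because the integrands $l^j\p_y\rho^\pm$ are dominated by the integrable function $l^j\gamma\,\mathbbm{1}_{[0,\overline L]}$, so dominated convergence (applied to difference quotients) validates differentiation under the integral sign. I would also note that $D^\pm$ and $C$ are only bounded, not necessarily continuous in $t$, which is all the lemma claims; no time-regularity enters, so the estimates hold pointwise on $\Omega_T$ exactly as stated.
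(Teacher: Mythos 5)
Your proof is correct and follows essentially the same route as the paper's: the paper's entire argument is the two-sided moment bound $\frac{(\underline{L}/2)^{j+1}}{j+1}\frac{\alpha_0}{2}\le\mu_j^\pm\le\frac{\overline{L}^{j+1}}{j+1}2\beta_0$, which is exactly your $m_j\le\mu_j^\pm\le M_j$, followed by "straightforward" substitution into the quotients. Your additional details — the explicit constants, the quotient-rule estimate for $\p_y D^\pm$ using $|\p_y\rho^\pm|\le\gamma$ and the compact $l$-support, and the justification of differentiating under the integral — are all correct fillings-in of what the paper leaves implicit.
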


\begin{proof}
The proof is straightforward using
$$
  \frac{(\underline{L}/2)^{j+1}}{j+1} \frac{\alpha_0}{2} \le \mu_j^\pm \le \frac{\overline{L}^{j+1}}{j+1} 2\beta_0 \,.
$$
\end{proof}

\begin{lem}\label{lem:Vfreeb}
Let, for an arbitrary $T>0$, $X\in R_{X,T}$ and let $C,D^\pm$ satisfy \fref{CD-prop}. 
Then \fref{eq:ellfreeb} has a unique solution $(V^+,V^-)\in L^\infty(0,T; W^{2,\infty}(0,1))^2$ satisfying 
\begin{equation}\label{eq:V-boundsfreeb}
  | V^+ - u_0^+|\leq \Gamma |F| \,,\qquad | V^- + \eta - u_0^+|\leq \Gamma |F| \,,\qquad |\p_yV^\pm|, |\p_y^2V^\pm|\leq c+\Gamma|F| \,,\qquad 
  \mbox{in } \Omega_T\,,
\end{equation}
with $\Gamma, c>0$ depending on the data.
\end{lem}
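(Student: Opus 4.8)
The plan is to prove Lemma~\ref{lem:Vfreeb} by treating \fref{eq:ellfreeb} as a perturbation of the explicitly solvable case $F=0$. First I would write $V^+ = u_0^+ + W^+$ and $V^- = u_0^+ - \eta + W^-$, where by \fref{explicit-V} the pair $(u_0^+, u_0^+-\eta)$ solves the system with $F=0$ and the homogeneous boundary conditions (recall $\p_y(u_0^+) = 0$ since $u_0^+$ depends only on $t$). The corrections $(W^+, W^-)$ then satisfy the same elliptic system \fref{eq:ellfreeb} but with the source term $\eta - V^+ + V^- = -W^+ + W^-$, together with the boundary conditions $W^+(y=0)=0$, $\p_y W^+(y=1)=0$, $\p_y W^-(y=0)=0$, and $(D^-\p_y W^-)(y=1) = X F$. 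All the inhomogeneity is thus concentrated in the single Neumann-type datum $X F$ at $y=1$, which is what will eventually produce the factor $|F|$ in every bound.

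Second, I would establish existence and uniqueness for $(W^+, W^-)$. The coupled system is linear and, after multiplying the two equations by $W^+$ and $-W^-$ respectively and integrating by parts over $(0,1)$, one obtains an energy identity of the form $\int_0^1 D^+(\p_y W^+)^2 + D^-(\p_y W^-)^2\,dy + X^2\int_0^1 C(W^+-W^-)^2\,dy = (\text{boundary term from } XF)$. By Lemma~\ref{lem:CDfreeb} the coefficients $D^\pm, C$ are bounded above and below by $\overline\kappa, \underline\kappa$, and $X\in R_{X,T}$ gives $X_0/2 \le X \le 2X_0$, so the quadratic form on the left is coercive in the natural $H^1$-type norm (the boundary condition $W^+(0)=0$ supplies a Poincaré inequality for $W^+$, and the $C$-term together with coercivity of $W^+$ controls $W^-$). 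Lax--Milgram then yields a unique weak solution, and the bound $\|W^\pm\|_{H^1} \le \Gamma_0 |F|$ follows from estimating the boundary term by $|XF|\,|W^-(1)| \le C_{\mathrm{tr}}|XF|\,\|W^-\|_{H^1}$ via the trace/Sobolev embedding $H^1(0,1)\hookrightarrow C([0,1])$ and absorbing. This gives the $L^\infty$ bounds $|W^\pm| \le \Gamma|F|$, i.e.\ the first two estimates in \fref{eq:V-boundsfreeb}.

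Third, for the derivative bounds I would bootstrap. Each equation reads $\p_y(D^\pm \p_y V^\pm) = \mp X^2 C(\eta - V^+ + V^-) = \mp X^2 C(W^- - W^+)$, whose right-hand side is now bounded in $L^\infty$ by $\overline\kappa(2X_0)^2 \cdot 2\Gamma|F|$. Integrating in $y$ and using the known Neumann data at the appropriate endpoint ($\p_y V^+(1)=0$ for the plus equation, $(D^-\p_y V^-)(1)=XF$ for the minus equation) determines $D^\pm \p_y V^\pm$ pointwise, giving $|\p_y V^\pm| \le \underline\kappa^{-1}(c_1 + \Gamma_1|F|)$; the structural constant $c$ appears because even for the minus-filament the endpoint flux $XF$ contributes, while for $F=0$ one recovers $\p_y V^\pm \equiv 0$. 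Differentiating once more, $\p_y^2 V^\pm = (D^\pm)^{-1}\bigl(\mp X^2 C(W^--W^+) - (\p_y D^\pm)(\p_y V^\pm)\bigr)$, and here the bound $|\p_y D^\pm| \le c$ from \fref{CD-prop} combines with the just-obtained $\p_y V^\pm$ estimate to yield $|\p_y^2 V^\pm| \le c + \Gamma|F|$ after relabelling constants. This also confirms $(V^+,V^-)\in L^\infty(0,T;W^{2,\infty}(0,1))^2$.

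The main obstacle I anticipate is the second-derivative estimate, specifically disentangling the purely structural part (the term that survives at $F=0$ and must be controlled by a constant $c$ depending only on the data through $\underline\kappa, \overline\kappa, c, X_0, \eta$) from the $F$-linear part. One must be careful that the constant $c$ in the derivative bounds is \emph{not} the same as the constant $c$ in \fref{CD-prop}, and that no hidden dependence on $T$ creeps in: since the elliptic problem is solved at each fixed $t$ and all constants come from the time-uniform bounds in \fref{CD-prop}, \fref{BC-ass}, and $R_{X,T}$, the estimates are genuinely uniform in $t\in[0,T]$, which is exactly what the $L^\infty(0,T;\cdot)$ statement requires. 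A secondary subtlety is verifying that the coupling term $\p_y D^\pm \cdot \p_y V^\pm$ in the second-derivative expression does not reintroduce an uncontrolled contribution; this is handled because $\p_y V^\pm$ was already bounded by $c+\Gamma|F|$ in the previous step, so the product stays within the claimed form.
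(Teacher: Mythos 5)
Your proposal is correct and follows essentially the same route as the paper: subtract the explicit $F=0$ solution \fref{explicit-V}, apply Lax--Milgram on the space $\{\phi^+(0)=0\}\times H^1$ with coercivity supplied by the Poincar\'e inequality for the plus component and the coupling term $X^2C(W^+-W^-)^2$ for the minus component, then obtain $|\p_y V^\pm|$ by integrating the equation from the Neumann endpoint and $|\p_y^2 V^\pm|$ from the equation itself using $|\p_y D^\pm|\le c$. The one genuine (but minor) difference is the treatment of the inhomogeneous flux condition $(D^-\p_y V^-)(1)=XF$: the paper lifts it into the unknown by adding $F X\int_y^1 \tilde y/D^-\,d\tilde y$ to $U^-$, so that all boundary conditions become homogeneous and the force appears as a volume source $FXV_r$ of size $O(|F|)$; you instead keep the inhomogeneous Neumann datum and absorb it as the boundary functional $XF\,\phi^-(1)$, bounded by $c|F|\,\|\phi\|_{H^1}$ via the embedding $H^1(0,1)\hookrightarrow C([0,1])$. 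Both give $\|G\|\le c|F|$ and the rest is identical; your version is slightly leaner here, while the paper's lifting pays off later in the Lipschitz estimate of Lemma \ref{lem:Lipfreeb}~b), where the explicit $V_r^i$ terms are differenced. One small slip: to get the stated energy identity you should test with $(W^+,W^-)$, not $(W^+,-W^-)$ --- the latter flips the sign of the $D^-$ term; the identity you actually wrote down is the correct one, so nothing downstream is affected.
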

\begin{proof}
We introduce the new unknowns
\[
  U^+(t,y):=V^+(t,y)-u_0^+(t) \,,\qquad U^-(t,y):=V^-(t,y)-u_0^+(t)+\eta+F(t)X(t)\int_y^1\frac{\tilde y}{D^-(t,\tilde y)}d\tilde y \,,
\]
satisfying the differential equations
\begin{eqnarray*}
  &&\p_y(D^+\p_yU^+)- X^2C(U^+-U^-)=F X V_r \,,\\
&&\p_y(D^-\p_yU^-)+ X^2C(U^+-U^-)=-FX\left( V_r+1\right) \,,
\end{eqnarray*}
with
\[
  V_r(t,y):=C(t,y)X(t)^2\int_y^1\frac{\tilde y}{D^-(t,\tilde y)}d\tilde y \,,
\]
and the homogeneous boundary conditions
\[
  U^+(y=0)=0 \,,\quad \p_y U^+(y=1)=0 \,,\quad \p_y U^-(y=0)=0\,,\quad \p_y U^-(y=1)=0 \,.
\]
Accordingly, for each fixed $t\in [0,T]$, we shall construct solutions $U = (U^+,U^-)$ in the Hilbert space
\[
  H_\p:=\left\{\phi=(\phi^+,\phi^-)\in H^1(0,1)^2 :\,\phi^+(0)=0\right\} \,,
\]
equipped with the natural norm
\[
  ||\phi||^2_{H_\p}:=||\phi^+||^2_{H^1(0,1)}+||\phi^-||^2_{H^1(0,1)}.
\]
The problem for $U\in H_\p$ can then be written in the weak form $a(U,\phi)=G(\phi)$ for all $\phi \in H_\p$ with
\begin{eqnarray*}
 a(U,\phi) &:=& \int_0^1 \left( D^+\p_y U^+\p_y\phi^+ + D^-\p_y U^-\p_y\phi^- + X^2C(U^+ - U^-)(\phi^+ - \phi^-) \right)dy \,,\\
G(\phi) &:=& -FX\int_0^1\left(V_r \phi^+-(V_r + 1)\phi^-\right)dy \,.
\end{eqnarray*}
As a consequence of the assumptions on $X$, $C$, and $D^-$, $V_r$ is bounded in terms of the data, and there exists a constant $c$ depending 
on the data, such that the linear functional $G$ is bounded by
\[
  \|G\|_{H_\p^\prime} \le c|F| \,.
\]
In order to show that the bilinear form $a$ is coercive, we use the Poincar\'e inequality
\[
  \|U^+\|_{L^2(0,1)} \le \|\p_y U^+\|_{L^2(0,1)} \qquad\mbox{for } U \in H_\p \,,
\]
and the elementary inequality 
\[
  A u^2+B(u-v)^2\geq\frac{AB}{A+2B}(u^2+v^2) \,,\qquad\mbox{for } A,B,u,v>0 \,.
\]
We therefore have
\begin{eqnarray*}
  a(U,U) &\ge& \underline\kappa \int_0^1\left( (\p_y U^+)^2 + (\p_y U^-)^2 + \frac{X_0^2}{4}(U^+ - U^-)^2 \right)dy\\
  &\ge& \underline\kappa \int_0^1\left( \frac{1}{2} (\p_y U^+)^2 + (\p_y U^-)^2 +\frac{1}{2}(U^+)^2+\frac{X_0^2}{4}(U^+ - U^-)^2\right)dy\\
  &\ge& \underline\kappa \min\left\{ \frac{1}{2}, \frac{X_0^2}{4(1+X_0^2)}\right\}\|U\|_{H_\p}^2 \,.
\end{eqnarray*}
Continuity of $a$ with a bound depending on the data is straightforward. The Lax-Milgram Lemma implies existence and uniqueness of a weak
solution $(V^+,V^-)$ of \fref{eq:ellfreeb} and the bound
\[
  \|V^+-u_0^+\|_{H^1(0,1)}+\|V^--u_0^++\eta\|_{H^1(0,1)}\le c |F| \,,
\]
with $c$ depending on the data.
The Sobolev embedding $H^1(0,1) \hookrightarrow L^\infty(0,1)$ completes the proof of the first two estimates in \fref{eq:V-boundsfreeb}.

The boundedness of $\p_y V^\pm$ is a consequence of the formula
\[
  \p_yV^-(t,y)=\frac{X(t)^2}{D^-(t,y)}\int_0^yC(t,\tilde y)\left(\eta-V^+(t,\tilde y)+V^-(t,\tilde y)\right)d\tilde y \,,
\]
with a corresponding version for $\p_y V^+$. The boundedness of the second order derivatives for bounded $\p_y D^\pm$ follows from elliptic regularity.
\end{proof}

Finally, we consider the problems \fref{eq:movingb} and \fref{eq:rhofreeb} for the bundle length and, respectively, the densities. This is the point where smallness of the applied force will be needed.
We require that, with the velocities obtained in Lemma \ref{lem:Vfreeb}, the $y$-characteristics of the equation for $\rho^+$ (the plus-characteristics) 
enter the domain at $y=0$ and leave it at $y=1$, and vice versa for $\rho^-$. The velocity of the plus-characteristics at $y=0$ is 
$V^+(y=0) = u_0^+ \ge \delta > 0$ by \fref{BC-ass}. At $y=1$, we have by \fref{eq:V-boundsfreeb} and again \fref{BC-ass},
\[
  V^+(y=1) - V^-(y=1) - u_1^- \ge \eta - u_1^- - 2\Gamma|F| \ge \delta - 2\Gamma|F| \,.
\]
Similarly, for the velocity of the minus-characteristics at $y=1$, $-u_1^- \le -\delta < 0$, and at $y=0$
\[
  V^-(y=0) \le u_0^+ - \eta + \Gamma|F| \le -\delta + \Gamma|F| \,.
\]
Obviously, the assumption
\begin{equation} \label{F-ass}
   |F(t)| \le \frac{\delta}{4\Gamma} \qquad\mbox{for } t\in [0,T] \,,
\end{equation}
guarantees the desired properties. Apart from it, the time interval will have to be short enough for the fixed point operator to be a self map.

\begin{lem}\label{lem:X}
There exists $T>0$ depending on the data such that, if $V^\pm$ and $F$ satisfy \fref{eq:V-boundsfreeb} and, respectively, \fref{F-ass}, the
unique solution $X$ of \fref{eq:movingb} satisfies $X\in R_{X,T}$.
\end{lem}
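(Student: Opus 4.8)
The plan is to observe that, since $V^\pm$ are here already fixed functions satisfying \fref{eq:V-boundsfreeb}, the relation \fref{eq:movingb} is no longer a genuine ODE but is solved by direct integration,
\[
  X(t) = X_0 + \int_0^t \left( V^-(s,y=1) + u_1^-(s)\right) ds \,.
\]
This is manifestly the unique solution, and since the trace $V^-(\cdot,y=1)+u_1^-$ is bounded (the velocities lie in $L^\infty(0,T;W^{2,\infty}(0,1))$ and $u_1^-\in C([0,\infty))$ by \fref{BC-ass}), the map $t\mapsto X(t)$ is Lipschitz, hence $X\in C([0,T])$ for every $T$. Thus the only substantive task is to bound $|X(t)-X_0|$ and to choose $T$ small enough to keep $X$ inside $[X_0/2,\,2X_0]$.

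To bound the integrand I would combine the three sets of assumptions. The second estimate in \fref{eq:V-boundsfreeb} gives $V^-(t,y=1)=u_0^+(t)-\eta+R(t)$ with $|R(t)|\le\Gamma|F|$, so that
\[
  V^-(t,y=1)+u_1^-(t) = \big(u_0^+(t)+u_1^-(t)-\eta\big) + R(t) \,.
\]
By \fref{BC-ass} we have $2\delta\le u_0^++u_1^-\le 2\eta-2\delta$, whence $|u_0^++u_1^--\eta|\le\eta-2\delta\le\eta$ using $\delta\le\eta/2$; together with \fref{F-ass}, which yields $|R(t)|\le\Gamma|F|\le\delta/4$, this produces a data-dependent bound $|V^-(t,y=1)+u_1^-(t)|\le\eta=:K$. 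Integrating gives $|X(t)-X_0|\le Kt$, so choosing $T:=X_0/(2K)$, which depends only on the data, guarantees $|X(t)-X_0|\le X_0/2$, i.e. $X_0/2\le X(t)\le 2X_0$, for all $t\in[0,T]$. This proves $X\in R_{X,T}$.

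The estimate is essentially a perturbation of the explicit force-free length \fref{explicit-X}, whose derivative is exactly the contraction rate $u_0^++u_1^--\eta$. Accordingly, there is no real obstacle here; the one point worth stressing is that the smallness assumption \fref{F-ass} on the force is precisely what keeps the perturbation $R(t)$ under control, so that the integrand is bounded purely in terms of the data and the choice of $T$ can be made uniform. Without it, the bound $K$ and hence $T$ could not be expressed through the data alone.
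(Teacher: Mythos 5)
Your proposal is correct and follows essentially the same route as the paper, which simply notes that the assumptions yield $|\dot X|\le\eta$ and concludes immediately; you have just written out the bound $|V^-(t,1)+u_1^-(t)|\le \eta-2\delta+\Gamma|F|\le\eta$ and the resulting choice $T=X_0/(2\eta)$ explicitly. No issues.
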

\begin{proof}
The result follows immediately, since the assumptions imply a bound of $\dot X$ depending on the data (actually $|\dot X| \le \eta$).
\end{proof}

\begin{lem}\label{lem:rhofreeb}
There exists $T>0$ depending on the data such that, if $X\in R_{X,T}$, and $V^\pm$ and $F$ satisfy \fref{eq:V-boundsfreeb} and, respectively, 
\fref{F-ass}, the problem \fref{eq:rhofreeb} has a unique solution $\rho^\pm \in R_{\rho,T}$ with $\gamma>0$ depending on the data.
\end{lem}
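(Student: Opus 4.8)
The plan is to solve the linear transport problem \fref{eq:rhofreeb} explicitly by the method of characteristics, in direct analogy with the construction of the explicit solution \fref{explicit-rho+}, \fref{explicit-rho-}, and then to read off the four defining properties of $R_{\rho,T}$ from the resulting representation. For the plus-family, the characteristics $(Y^+(s),L^+(s))$ through a point $(t,y,l)$ solve
\begin{equation*}
  \dot Y^+ = \frac{1}{X}\bigl(V^+(s,Y^+) - (V^-(s,1)+u_1^-)Y^+\bigr), \qquad \dot L^+ = -s_l,
\end{equation*}
with a symmetric system for the minus-family. The right-hand side of the $Y$-equation is Lipschitz in $Y$ uniformly in $s$, since $X\ge X_0/2$ and $|\p_yV^\pm|\le c+\Gamma|F|$ by Lemma \ref{lem:Vfreeb}, so the characteristics exist and are unique. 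The sign estimates established above under \fref{F-ass} guarantee that every plus-characteristic enters the strip at $y=0$ and leaves at $y=1$ (and conversely for minus), so that, tracing backwards, each point $(t,y)$ is reached either from the initial slice $t=0$ or from the inflow boundary. Along a characteristic the density satisfies the linear ODE $\tfrac{d}{ds}\rho^\pm = -\tfrac{1}{X}\rho^\pm\,\p_yV^\pm$, whence $\rho^\pm$ equals the corresponding boundary or initial datum, evaluated at $L^+ = l + s_l\tau$ (with $\tau$ the travel time back to the source), multiplied by the exponential weight $\exp\bigl(-\int \tfrac{1}{X}\p_yV^\pm\,ds\bigr)$.

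The three pointwise bounds then follow directly. Since $\tau\le t\le T$, $X\ge X_0/2$, and $|\p_yV^\pm|\le c+\Gamma|F|$, the exponential weight lies in $[\tfrac12,2]$ once $T$ is small enough; combined with data $\le\beta_0$ this yields $\rho^\pm\le 2\beta_0$. For the support property, a point with $l\ge\overline L$ is traced back to a source length $l+s_l\tau\ge\overline L$, where the data vanish by \fref{rho-ass}, so $\rho^\pm=0$ there for every $T$. For the lower bound, a point with $0\le l\le\underline L/2$ is traced back to source length $l+s_l\tau\le \underline L/2 + s_l T\le\underline L$ provided $T\le\underline L/(2s_l)$; there the data are $\ge\alpha_0$, and with the exponential weight $\ge\tfrac12$ we obtain $\rho^\pm\ge\alpha_0/2$.

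The main obstacle is the Lipschitz bound $|\p_y\rho^\pm|\le\gamma$. I would differentiate the representation formula in $y$: the chain rule produces a term with $\p_y$ of the source datum (controlled by $M$ through \fref{rho-ass}), a term from the $y$-derivatives of the travel time $\tau$ and of the backward characteristic map, and a term from the $y$-derivative of the exponential weight. The derivative of the characteristic flow with respect to the base point solves the linearized ODE obtained by differentiating the $Y$-equation, whose coefficient is $\tfrac{1}{X}(\p_yV^\pm-(V^-(1)+u_1^-))$; a Gronwall estimate then bounds the flow derivative in terms of the data on $[0,T]$, while the $y$-derivative of the exponential weight involves $\p_y^2V^\pm$, again bounded by Lemma \ref{lem:Vfreeb}. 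Collecting these contributions gives a bound of the form $|\p_y\rho^\pm|\le\gamma$ with $\gamma$ depending only on the data (and stable as $T\to0$).

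Continuity of $\rho^\pm$ across the characteristic separating the initial- and boundary-determined regions, which is exactly the compatibility condition $\rho_0^+(0,l)=\rho_I^+(0,l)$, $\rho_1^-(0,l)=\rho_I^-(1,l)$ in \fref{rho-ass}, ensures that the one-sided derivative bounds combine into an essential bound on all of $\Omega_T\times(0,\infty)$. Shrinking $T$ so as to satisfy simultaneously the smallness conditions above yields $\rho^\pm\in R_{\rho,T}$, and uniqueness is immediate from the characteristic representation, since the transport equation admits no other solution with the prescribed initial and boundary data.
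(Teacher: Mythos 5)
Your proposal is correct and follows essentially the same route as the paper: the method of characteristics, with the sign conditions from \fref{F-ass} ensuring inflow at the correct boundaries, the conditions $T\le\underline L/(2s_l)$ and $e^{cT}\le 2$ giving the pointwise bounds, and a Gronwall-type argument for $|\p_y\rho^\pm|$. The only cosmetic difference is that you obtain the derivative bound by differentiating the representation formula (hence the flow map and travel time $\tau$), whereas the paper differentiates the transport equation itself and reads off a boundary value for $\p_y\rho^+$ at $y=0$ from the PDE; both hinge on the same ingredients, namely $\p_y^2V^\pm\in L^\infty$, the bound $M$ in \fref{rho-ass}, and the lower bound $V^+(y=0)=u_0^+\ge\delta$.
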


\begin{proof}
Since, as a consequence of \fref{F-ass}, the plus- and minus-characteristics have the desired sign properties as discussed above, and by the regularity 
of $V^\pm$, the problem can be solved by the method of characteristics.
The condition $s_l>0$ implies that the $l$-characteristics are outgoing at the boundary $l=0$, such that no boundary conditions are required there
and the support of $\rho^\pm$ in terms of $l$ shrinks. 
The characteristic equation
$$
  \frac{d}{dt}\rho^\pm = -\frac{1}{X}\rho^\pm \p_yV^\pm \,,
$$ 
and the bound $|\p_y V^\pm/X| \le c$ depending on the data already imply the required inequalities for $T\le \underline L/(2s_l)$ and 
$e^{cT}\le 2$, except the bound on $\p_y\rho^\pm$. The latter is shown
by differentiating the differential equation with respect to $y$ and solving for $\p_y\rho^\pm$ again by the method of characteristics.
A boundary condition for $\p_y \rho^+$ is obtained from the differential equation:
$$
  \p_y \rho^+(y=0) = -\frac{X(\p_t \rho_0^+ -s_l\p_l \rho_0^+) + \rho_0^+ \p_y V^+(y=0)}{V^+(y=0)} \,.
$$
The right hand side and the initial data $\p_y\rho^+(t=0)=\p_y\rho^+_I$ are bounded by \fref{rho-ass} and by the assumption on $V^+$.
An exponentially increasing bound for $\p_y\rho^+$ (as for $\rho^\pm$) follows. The bound for $\p_y \rho^-$ is shown analogously.
\end{proof}

We now prove Lipschitz continuity of the maps defined by the previous four lemmas. For this purpose, we need norms for the solution components.
We choose
\begin{eqnarray*}
  &&\|\rho^\pm\|_{\rm density} := \sup_{t\in(0,T)} \|\rho^\pm(t,\cdot,\cdot)\|_{L^2((0,1)\times(0,\infty))} \,,\qquad 
     \|X\|_{\rm length} := \sup_{t\in(0,T)} |X(t)| \,,\\
  &&\|V^\pm\|_{\rm vel} := \sup_{t\in(0,T)} \|V^\pm(t,\cdot)\|_{H^1(0,1)} \,,\qquad \|D^\pm\|_{\rm coeff} := \sup_{t\in (0,T)} \|D^\pm(t,\cdot)\|_{L^2(0,1)} \,.
\end{eqnarray*}
Differences will be denoted by $\Delta\rho^\pm := \rho^\pm_1 - \rho^\pm_2$ and analogously for the other components $X,V^\pm,C,D^\pm$.

\begin{lem}\label{lem:Lipfreeb}
a) \label{Deltacoeff} Let $\rho^\pm_1$ and $\rho^\pm_2$ satisfy the assumptions of Lemma \ref{lem:CDfreeb}. Then there exists a constant $c>0$ depending on the data, such that the corresponding coefficients $(D^\pm_1,C_1)$ and $(D^\pm_2,C_2)$ defined by \fref{eq:coeff-y} satisfy
\begin{eqnarray*}
  && \|\Delta D^+\|_{\rm coeff} + \|\Delta D^-\|_{\rm coeff} + \|\Delta C\|_{\rm coeff}  \le c\left( \|\Delta \rho^+\|_{\rm density} + 
     \|\Delta \rho^-\|_{\rm density} \right) \,.
\end{eqnarray*}
b) Let $(D^\pm_1,C_1,X_1)$ and $(D^\pm_2,C_2,X_2)$ satisfy the assumptions of Lemma \ref{lem:Vfreeb} and let \fref{F-ass} hold. Then there exists 
a constant $c>0$ depending on the data, such that the corresponding solutions $V^\pm_1$ and $V^\pm_2$ of \fref{eq:ellfreeb} satisfy
\begin{eqnarray*}
  && \|\Delta V^+\|_{\rm vel}+\|\Delta V^-\|_{\rm vel}  \le c\left( \|\Delta X\|_{\rm length} + \|\Delta C\|_{\rm coeff} + 
     \|\Delta D^+\|_{\rm coeff} + \|\Delta D^-\|_{\rm coeff}\right) \,.
\end{eqnarray*}
c) Let $V_1^-$ and $V_2^-$ satisfy the assumptions of Lemma \ref{lem:X} and let \fref{F-ass} hold. Then there exists 
a constant $c>0$ depending on the data, such that the corresponding solutions $X_1$ and $X_2$ of \fref{eq:movingb} satisfy
\[
   \|\Delta X\|_{\rm length} \le  cT \|\Delta V^-\|_{\rm vel} \,.
\]
d) Let $(X_1,V^\pm_1)$ and $(X_2,V^\pm_2)$ satisfy the assumptions of Lemma \ref{lem:rhofreeb} and let \fref{F-ass} hold. Then there exist constants $c_1,c_2>0$ depending on the data, such that the corresponding solutions $\rho^\pm_1$ and $\rho^\pm_2$ of \fref{eq:rhofreeb} satisfy
\begin{eqnarray*}
  \|\Delta\rho^+\|_{\rm density} + \|\Delta\rho^-\|_{\rm density} \le c_1\left(e^{c_2T}-1\right) 
  \left( \|\Delta V^+\|_{\rm vel} + \|\Delta V^-\|_{\rm vel} + \|\Delta X\|_{\rm length} \right) \,.
\end{eqnarray*}
\end{lem}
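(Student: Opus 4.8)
The plan is to treat the four parts in the order (a), (c), (b), (d), since each is an a priori difference estimate for the map constructed in the corresponding earlier lemma, and the two algebraically simplest bounds (a) and (c) are essentially immediate. For part (a), I would first note that the moments depend linearly on the densities, so that $\Delta\mu_j^\pm(t,y)=\int_0^\infty l^j\Delta\rho^\pm(t,y,l)\,dl$; since every admissible density vanishes for $l\ge\overline L$, Cauchy--Schwarz gives $|\Delta\mu_j^\pm(t,y)|\le c\,\|\Delta\rho^\pm(t,y,\cdot)\|_{L^2(0,\infty)}$ with $c$ depending only on $\overline L$, and integrating in $y$ and taking the supremum in $t$ yields $\sup_t\|\Delta\mu_j^\pm(t,\cdot)\|_{L^2(0,1)}\le c\,\|\Delta\rho^\pm\|_{\rm density}$. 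The coefficients $D^\pm,C$ are rational functions of the moments whose denominator $\mu_1^++\mu_1^-$ is bounded below and whose numerators are bounded above (both by Lemma \ref{lem:CDfreeb}); a standard telescoping of the products and quotients then bounds $|\Delta D^\pm|,|\Delta C|$ pointwise by a constant times $|\Delta\mu_1^+|+|\Delta\mu_1^-|+|\Delta\mu_3^\pm|$, which closes (a). Part (c) is even more direct: integrating \fref{eq:movingb} gives $\Delta X(t)=\int_0^t\Delta V^-(s,y{=}1)\,ds$, and since the trace at $y=1$ is controlled by the $H^1$-norm through $H^1(0,1)\hookrightarrow L^\infty(0,1)$, one obtains $|\Delta X(t)|\le\int_0^t c\,\|\Delta V^-(s,\cdot)\|_{H^1(0,1)}\,ds\le cT\,\|\Delta V^-\|_{\rm vel}$.

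For part (b) I would work directly with the weak formulation of \fref{eq:ellfreeb} in the $V^\pm$ themselves rather than in homogenised unknowns. Writing $\tilde a_i(V,\phi)$ and $\tilde G_i(\phi)$ for the bilinear form and load functional built from the data triple $(D^\pm_i,C_i,X_i)$ — the bilinear form being precisely the coercive form $a$ of the proof of Lemma \ref{lem:Vfreeb} and the functional now carrying the natural boundary term $X_iF\phi^-(1)$ — the two solutions satisfy $\tilde a_i(V_i,\phi)=\tilde G_i(\phi)$ for all $\phi\in H_\p$. Because the Dirichlet datum $u_0^+$ is common to both problems, the difference $\Delta V$ lies in $H_\p$, and subtracting after inserting $V_2$ into $\tilde a_1$ gives
\[
  \tilde a_1(\Delta V,\phi)=\big(\tilde G_1-\tilde G_2\big)(\phi)-\big(\tilde a_1-\tilde a_2\big)(V_2,\phi)\,.
\]
Choosing $\phi=\Delta V$ and invoking the coercivity of $\tilde a_1$ (uniform over admissible data, as established in Lemma \ref{lem:Vfreeb}) reduces (b) to estimating the right-hand side: the load difference contributes $\eta\,\Delta(X^2C)$ and $F\,\Delta X$ terms (note $F$ is common to both problems), while $\tilde a_1-\tilde a_2$ contributes $\Delta D^\pm$ and $\Delta(X^2C)$ paired against $V_2$. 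Using the $L^\infty$-bounds on $V_2$ and $\p_yV_2$ from \fref{eq:V-boundsfreeb} together with Cauchy--Schwarz (the coefficient differences enter through $\|\cdot\|_{\rm coeff}=L^2_y$ against the $L^2$-factors $\p_y\Delta V^\pm$), every term is bounded by $c\big(\|\Delta X\|_{\rm length}+\|\Delta C\|_{\rm coeff}+\|\Delta D^+\|_{\rm coeff}+\|\Delta D^-\|_{\rm coeff}\big)\,\|\Delta V\|_{H_\p}$; dividing by $\|\Delta V\|_{H_\p}$ and taking the supremum in $t$ closes (b).

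The main obstacle is part (d), where the precise factor $c_1(e^{c_2T}-1)$ has to be produced. I would subtract the two transport equations \fref{eq:rhofreeb}: since the initial datum $\rho_I^\pm(X_0y,l)$ and the inflow data $\rho_0^+,\rho_1^-$ are common, $\Delta\rho^\pm$ solves the transport equation with the coefficients of problem $1$, homogeneous initial and inflow data, and a source
\[
  \big(b_2^\pm-b_1^\pm\big)\,\p_y\rho_2^\pm+\big(r_1^\pm-r_2^\pm\big)\rho_2^\pm\,,
\]
where $b^\pm=\tfrac1X(V^\pm-(V^-(y{=}1)+u_1^-)y)$ and $r^\pm=-\tfrac1X\p_yV^\pm$. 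The coefficient differences are controlled by $\|\Delta V\|_{\rm vel}+\|\Delta X\|_{\rm length}$ (again using the $H^1$-trace for $V^-(y{=}1)$ and the lower bound $X\ge X_0/2$), and the factors $\p_y\rho_2^\pm,\rho_2^\pm$ are bounded in $L^\infty$ with compact $l$-support because $\rho_2^\pm\in R_{\rho,T}$, so the source is bounded in $L^2((0,1)\times(0,\infty))$ by $c(\|\Delta V\|_{\rm vel}+\|\Delta X\|_{\rm length})$ uniformly in $t$. An $L^2$ energy estimate then closes the argument: multiplying by $\Delta\rho^\pm$ and integrating in $(y,l)$, the $y$-transport term leaves only $\tfrac12\int\p_yb_1^\pm(\Delta\rho^\pm)^2$ after integration by parts (the inflow boundary term vanishes and the outflow one has the right sign, by the characteristic sign properties guaranteed by \fref{F-ass}), while the $-s_l\p_l$ term yields a nonnegative boundary contribution at $l=0$; hence $\tfrac{d}{dt}\|\Delta\rho^\pm(t)\|_{L^2}\le c_2\|\Delta\rho^\pm(t)\|_{L^2}+c(\|\Delta V\|_{\rm vel}+\|\Delta X\|_{\rm length})$. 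With vanishing data, Gronwall integrates this to $\|\Delta\rho^\pm\|_{\rm density}\le c_1(e^{c_2T}-1)\big(\|\Delta V^+\|_{\rm vel}+\|\Delta V^-\|_{\rm vel}+\|\Delta X\|_{\rm length}\big)$, the $-1$ reflecting the vanishing of the source-driven contribution as $T\to0$. The delicate points are the correct vanishing/sign of the transport boundary terms and the availability of differentiated-in-$y$ control on $\rho_2$ needed to bound the source — which holds precisely because $R_{\rho,T}$ enforces $|\p_y\rho_2^\pm|\le\gamma$.
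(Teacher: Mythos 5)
Your proposal is correct and follows essentially the same route as the paper in all four parts: (a) moments controlled via Cauchy--Schwarz on the compact $l$-support plus the lower bound on $\mu_1^++\mu_1^-$, (c) integration of $\Delta\dot X=\Delta V^-(t,1)$ with the $H^1\hookrightarrow L^\infty$ trace, and (d) an $L^2$ energy/Gronwall estimate for $\Delta\rho$ with the transport boundary terms signed by the characteristic directions and the source bounded using $|\p_y\rho_2|\le\gamma$ from $R_{\rho,T}$. The only (cosmetic) deviation is in (b): the paper first homogenizes both boundary conditions via the auxiliary unknowns $U_i^\pm$ and writes out the residual functional $\overline G$ explicitly, whereas you subtract the weak formulations for $V_i$ directly and keep the Neumann datum as a natural boundary term -- both reduce to coercivity of the common bilinear form applied to $\Delta V\in H_\p$ and yield the same estimate.
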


\begin{proof}
a) Noting that, by the boundedness of the support, the $l$-moments of the densitites can be bounded in terms of the $L^2_l$-norm, the proof is 
a straightforward computation.\\
b) We proceed analogous to the existence proof, and introduce auxiliary functions
\[U_i^+(t,y):=V_i^+(t,y)-u_0^+(t),\,\, U_i^-(t,y):=V_i^-(t,y)-u_0^+(t)+\eta+F(t)\underbrace{X_i(t)\int_y^1\frac{\tilde yd\tilde y}{D_i^-(t,\tilde y)}}_{=:V_r^i(y)}.\]
Then $\Delta U:=(\Delta U^+,\Delta U^-)$ satisfies $a\left(\Delta U,\phi\right)=\overline G(\phi)$ for all $\phi:=(\phi^+,\phi^-)\in H_\p$, where the space $H_\p$, together with the corresponding norm, and the bilinear form $a$ are defined in the same way as in Lemma \ref{lem:Vfreeb}. The functional $\overline G$ is given by
\begin{align*}
 \overline G(\phi)=\int_0^1\Biggl(&-\Delta C((U_2^+-U_2^-)+F V_r^1)(\phi^+-\phi^-)-\frac{\Delta D^+}{X_1^2}\p_yU_2^+\p_y\phi^+
       -\frac{\Delta D^-}{X_1^2}\p_yU_2^-\p_y\phi^-\\
 &+\Delta X\frac{X_1+X_2}{X_1X_2}\left(D_2^+\p_yU^+_2\p_y\phi^++D_2^-\p_yU^-_2\p_y\phi^-\right)-\Delta X\frac{F}{X_1X_2}\phi^-\\
 &-(\phi^+-\phi^-)F C_2\int_y^1\tilde y\left(\frac{\Delta X}{D_2^-(\tilde y)}-\Delta D^-(\tilde y)\frac{X_1}{D_1^-(\tilde y)D_2^-(\tilde y)} \right)
    d\tilde y\Biggr)dy \,.
\end{align*}
We derive an estimate on this functional by using the boundedness of the functions $U_2^\pm,\p_yU_2^\pm$, $X_{1,2}$, $D_2^\pm$, and $C_2$. The Hilbert space norms, together with the $L_y^2-$norms of the coefficients $\Delta D^\pm,\Delta C$, are extracted with the Cauchy-Schwarz inequality as well as the elementary inequalities $(a\pm b)^2\leq2(a^2+b^2)$ and $\sqrt{x+y}\leq\sqrt x+\sqrt y\leq2\sqrt{x+y}$  for $x,y\ge 0$. \\
c) Since $\Delta \dot X=\dev^-(t,1)$, the proof is straightforward using the Sobolev embedding $H^1(0,1) \hookrightarrow L^\infty(0,1)$, and 
integrating the differential equation with $\Delta X(t=0)=0$.\\
d) We drop the superscript $\pm$, since the proof is the same for both cases. Multiplication of the difference of the equations for $\rho_1$ and $\rho_2$ by $2\Delta\rho$ and integration with
respect to $l$ and $y$ gives
\begin{eqnarray}
  \frac{d}{dt} \int_0^1 \int_0^\infty (\Delta\rho)^2 dl\,dy &=&2\int_0^1\int_0^\infty A\dr \,dl\,dy    - \frac{1}{X_1}\int_0^1 \int_0^\infty 
   \left(\p_yV_1+\dot X_1\right)(\dr)^2 dl\,dy \nonumber\\
  &&  -  \frac{1}{X_1} \int_0^\infty \left(V_1-\dot X_1y\right)(\dr)^2\Bigm|_{y=0}^1 dl \, -s_l \int_0^1 \Delta\rho(l=0)^2 dy \,,\label{eq:deltarho}
\end{eqnarray}
where in the first term we have used the abbreviation
\[
   A:=-\frac{\p_y\rho_2}{X_1}\dev-\frac{\rho_2}{X_1}\p_y\dev+\frac{y\p_y\rho_2}{X_1}\Delta \dot X+\Delta X\frac{1}{X_1X_2}
      \left((V_2-X_2y)\p_y\rho_2+\rho_2\p_yV_2\right) \,.
\]
The terms in the second line are nonpositive, which is obvious for the second. Concerning the first term, in the plus-equation $V_1-\dot X_1y$ is positive and $\Delta\rho(y=0)=0$, and for the minus-equation $V_1-\dot X_1y$ is negative and $\Delta\rho(y=1)=0$. Since $X_{1,2}$, $V_{1,2}$, $\p_y V_{1,2}$, $\rho_{2}$, and $\p_y\rho_{2}$ are bounded, and because $\Delta \dot X$ can be bounded by $\Delta V^-$ due to c), this implies
\begin{eqnarray*}
  \frac{d}{dt} \|\Delta\rho\|_{L^2((0,1)\times(0,\infty))} \le 
      c_1\|\Delta\rho\|_{L^2((0,1)\times(0,\infty))} 
  + c_2\left(\|\Delta V^+\|_{H^1(0,1)}+\|\Delta V^-\|_{H^1(0,1)}+\|\Delta X\|_{L^\infty_t}\right) \,.
\end{eqnarray*}
In the estimate of the first term the fact that $\Delta\rho$ has bounded support as a function of $l$ has been used. The proof of d) 
is completed by an application of the Gronwall inequality and noting that $\Delta\rho(t=0)=0$ holds.
\end{proof}

\begin{thm}
Let $C_0,D_0,\eta,s_l,X_0>0$, \fref{BC-ass}, \fref{rho-ass}, and \fref{F-ass} hold. Then there exists $T>0$, such that problem 
\fref{eq:prob}--\fref{eq:force} has a unique solution 
$$
  (\rho^+,\rho^-,V^+,V^-,X) \in L^\infty\left(0,T;\, W^{1,\infty}_x(L^\infty_l)^2\times (W^{2,\infty}_x)^2 \times \mathbb{R}\right) \,,
$$
such that $\rho^+,\rho^-> 0$ are uniformly bounded away from zero in $\{(t,x,l):\, 0<t<T,\,0<x<X(t),\,0<l<\underline L/2\}$,
$V^+ > \max\{0,\dot X\}$, and $V^- < \min\{0,\dot X\}$. 
\end{thm}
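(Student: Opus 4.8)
The plan is to recast the transformed system \fref{eq:coeff-y}--\fref{eq:movingb} as a fixed-point problem for the triple $(\rho^+,\rho^-,X)$ and to apply the Banach fixed-point theorem on the set $R_{\rho,T}^2\times R_{X,T}$. Concretely, I would define an operator $\mathcal{T}$ as the composition of the four maps analysed in Lemmas \ref{lem:CDfreeb}--\ref{lem:rhofreeb}: given $(\rho^+,\rho^-,X)\in R_{\rho,T}^2\times R_{X,T}$, first form the coefficients $C,D^\pm$ via \fref{eq:coeff-y}, then solve \fref{eq:ellfreeb} for the velocities $V^\pm$, then integrate \fref{eq:movingb} to obtain an updated length $\tilde X$, and finally solve the transport problem \fref{eq:rhofreeb} (with $V^\pm$ and the updated length $\tilde X$) to obtain new densities $\tilde\rho^\pm$; set $\mathcal{T}(\rho^+,\rho^-,X):=(\tilde\rho^+,\tilde\rho^-,\tilde X)$. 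A fixed point of $\mathcal{T}$ is exactly a solution of \fref{eq:coeff-y}--\fref{eq:movingb}, and hence of \fref{eq:prob}--\fref{eq:force} after undoing the scaling \fref{eq:scaling}.

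The self-mapping property is the content of the preceding lemmas. Lemma \ref{lem:CDfreeb} yields \fref{CD-prop} for any $T$; Lemma \ref{lem:Vfreeb} then produces unique velocities satisfying \fref{eq:V-boundsfreeb}, again for any $T$; Lemma \ref{lem:X} gives $\tilde X\in R_{X,T}$, and Lemma \ref{lem:rhofreeb} gives $\tilde\rho^\pm\in R_{\rho,T}$, each provided $T$ is small enough and \fref{F-ass} holds. Taking $T$ to be the minimum of the two smallness thresholds---in particular $T\le\underline L/(2s_l)$, which is what prevents the complete depolymerization that would destroy the lower bound $\rho\ge\alpha_0/2$ and the ellipticity of \fref{eq:ellfreeb}---guarantees that $\mathcal{T}$ maps $R_{\rho,T}^2\times R_{X,T}$ into itself.

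For the contraction I would chain the four Lipschitz estimates of Lemma \ref{lem:Lipfreeb} in the norm $\|\Delta\rho^+\|_{\rm density}+\|\Delta\rho^-\|_{\rm density}+\|\Delta X\|_{\rm length}$. Starting from the input differences, part (a) bounds $\|\Delta D^\pm\|_{\rm coeff}+\|\Delta C\|_{\rm coeff}$ by the density differences, part (b) then bounds $\|\Delta V^\pm\|_{\rm vel}$ by $\|\Delta X\|_{\rm length}$ plus the coefficient differences, and part (c) gives $\|\Delta\tilde X\|_{\rm length}\le cT\|\Delta V^-\|_{\rm vel}$. Feeding these into part (d) yields
\[
  \|\Delta\tilde\rho^+\|_{\rm density}+\|\Delta\tilde\rho^-\|_{\rm density}+\|\Delta\tilde X\|_{\rm length}
  \le \theta(T)\big(\|\Delta\rho^+\|_{\rm density}+\|\Delta\rho^-\|_{\rm density}+\|\Delta X\|_{\rm length}\big) \,,
\]
with $\theta(T)=c\big((e^{c_2T}-1)+T\big)$ and constants depending only on the data. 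Since $\theta(T)\to0$ as $T\to0$, shrinking $T$ further (still below the self-map thresholds) makes $\mathcal{T}$ a strict contraction, and Banach's theorem provides a unique fixed point in the set.

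It remains to verify that $R_{\rho,T}^2\times R_{X,T}$ is a complete metric space for the chosen norms and to translate back. $R_{X,T}$ is closed in $C([0,T])$, and $R_{\rho,T}$ is closed in the density norm: along an $L^2$-convergent sequence a subsequence converges a.e., preserving the pointwise bounds and the support constraint, while the uniform bound $|\partial_y\rho|\le\gamma$ passes to the limit by weak-$*$ compactness in $L^\infty$. The fixed point together with the associated $V^\pm$ from Lemma \ref{lem:Vfreeb} constitutes the solution, whose regularity $W^{1,\infty}_x(L^\infty_l)^2\times(W^{2,\infty}_x)^2\times\mathbb{R}$ follows from the lemmas after reverting \fref{eq:scaling} (legitimate since $X$ is bounded away from $0$ and $\infty$), and the sign conditions $V^+>\max\{0,\dot X\}$, $V^-<\min\{0,\dot X\}$ together with the positivity of $\rho^\pm$ are exactly the characteristic sign estimates secured under \fref{F-ass} before Lemma \ref{lem:X}. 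I expect the main obstacle to be the norm mismatch: the invariant set $R_{\rho,T}$ is defined through $L^\infty$ and gradient bounds, yet the contraction is only available in the weaker density ($L^2$) norm, so one must argue completeness and closedness of the set in this weaker topology while the stronger bounds are used solely to supply the uniform constants in Lemmas \ref{lem:CDfreeb}--\ref{lem:Lipfreeb}.
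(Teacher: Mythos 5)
Your proposal is correct and follows essentially the same route as the paper: a Banach fixed-point argument on $(R_{\rho,T})^2\times R_{X,T}$, with the self-map property supplied by Lemmas \ref{lem:CDfreeb}--\ref{lem:rhofreeb} and the contraction obtained by chaining the Lipschitz estimates of Lemma \ref{lem:Lipfreeb}, the small factors $cT$ and $c_1(e^{c_2T}-1)$ from parts c) and d) giving contractivity for small $T$ in the $L^\infty_t((L^2_{y,l})^2\times\mathbb{R})$-topology. You in fact spell out one point the paper leaves implicit, namely the closedness of the invariant set in the weaker metric despite its definition via $L^\infty$ and gradient bounds, and your resolution of that mismatch is the right one.
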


\begin{proof}
By first scaling the system according to \fref{eq:scaling}, Lemma \ref{lem:Lipfreeb} a)--d) shows that the fixed point map on $(R_{\rho,T})^2\times R_{X,T}$ 
is Lipschitz continuous with respect to the 
$L^\infty_t((L^2_{y,l})^2\times\mathbb{R})$-topology. By Lemma \ref{lem:Lipfreeb} c), d), it is a contraction for $T$ small enough, proving existence and uniqueness of the scaled version and therefore also of the original problem. 
The properties of the solution are a consequence of the Lemmas \ref{lem:CDfreeb}, \ref{lem:X}, and \ref{lem:rhofreeb}.
\end{proof}

\section{Prescribed bundle length -- existence and uniqueness}\label{fxbvp}

In this section, $X(t)$ will be assumed as given, satisfying
\begin{equation}\label{X-ass}
  X \in C^1([0,\infty)) \,,\qquad 0< X_0/2\leq X(t)\le 2 X_0  \quad\mbox{for } t\ge 0\,.
\end{equation}
This assumption removes one of the obstacles for global existence in the preceding section with the consequence that we prove a 
global existence result at the end of this section. The problem \fref{eq:ellfreeb} for the velocities is replaced by
\begin{eqnarray}\label{eq:ell}
\begin{array}{ l}     
    0=\p_y\left(D^\pm\p_yV^\pm\right)\pm X^2 C\left(\eta-V^++V^-\right) \,,\\
    V^+(y=0)=u_0^+, \qquad   \p_y V^+(y=1)=0 \,,\\
    \p_y V^-(y=0)=0,\qquad  V^-(y=1) = -u_1^- + \dot X \,,\\
\end{array}
\end{eqnarray}
and the system \fref{eq:coeff-y}, \fref{eq:ell}, \fref{eq:rhofreeb} has to be solved for $C$, $D^\pm$, $V^\pm$, and $\rho^\pm$.
The last equation in  \fref{eq:ellfreeb} can be used as a posteriori information, to calculate the force the bundle exerts on its neighborhood.

Since the subproblem \fref{eq:rhofreeb} for the densities $\rho^\pm$ did not change, we will assume the same bounds \fref{rho-ass}  on the data 
as before. For \fref{eq:rhofreeb} to be well posed, the assumption \fref{BC-ass} on the boundary data for the velocities will be replaced by
\begin{equation}\label{V-ass1}
  \delta + \dot X(t)_+ \le u_0^+(t),\,u_1^-(t) \le \eta - \delta - (-\dot X(t))_+ \,,\qquad t\ge 0\,,
\end{equation}
with $\delta>0$. Note that this implicitly contains the assumption $|\dot X| \le \eta-2\delta$ on the time changes of the bundle length.

The local existence proof follows the strategy of the preceding section. Therefore we shall only concentrate on the main differences. 
The fixed point iteration now acts on $(\rho^+,\rho^-)\in (R_{\rho,T})^2$. The first step is again the computation of the friction
coefficients $C,D^\pm$, whose properties are given in Lemma \ref{lem:CDfreeb}.
In the elliptic problem \fref{eq:ell} for the velocities, an inhomogeneous Neumann boundary condition has been replaced by a Dirichlet condition. 
In the proof of the following result, this permits direct estimates by the maximum principle, replacing the $L^\infty$-estimates by Sobolev embedding 
of the preceding section.

\begin{lem}\label{lem:Vfixedb}
Let, for an arbitrary $T>0$, $C,D^\pm$ satisfy \fref{CD-prop}, and let \fref{V-ass1} hold. 
Then \fref{eq:ell} has a unique solution $(V^+,V^-)\in L^\infty(0,T; W^{2,\infty}(0,1))^2$ satisfying 
\begin{eqnarray}
  && \delta + \dot X(t)_+ \le V^+(t,y) \le \eta - \delta - (-\dot X(t))_+ \,,\quad
  -\eta + \delta + \dot X(t)_+ \le V^-(t,y) \le - \delta - (-\dot X(t))_+ \,,\nonumber\\
  && |\p_yV^\pm|, |\p_y^2V^\pm|\leq c\,,\qquad\mbox{in } \Omega_T \,,\label{V-est}
\end{eqnarray}
with $c$ depending only on the data (which means the same as in the preceding section).
\end{lem}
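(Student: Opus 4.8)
The plan is to follow the three-step structure of Lemma~\ref{lem:Vfreeb}, but to replace the $H^1$-plus-Sobolev route to the $L^\infty$ bounds by a maximum-principle argument that exploits the fact that each component now carries one Dirichlet condition.

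\emph{Existence and uniqueness.} First I would homogenize the two Dirichlet conditions by setting $U^+:=V^+-u_0^+$ and $U^-:=V^--(\dot X-u_1^-)$; since the subtracted quantities depend only on $t$, the principal parts are unchanged and the coupling term $\eta-V^++V^-$ becomes $(\eta-u_0^+-u_1^-+\dot X)-U^++U^-$, so that the $y$-independent mismatch plays the role of a bounded right-hand side. The resulting problem has homogeneous data $U^+(0)=0$, $\p_yU^+(1)=0$, $\p_yU^-(0)=0$, $U^-(1)=0$, and I would pose it weakly on $H_\p:=\{(\phi^+,\phi^-)\in H^1(0,1)^2:\phi^+(0)=0,\ \phi^-(1)=0\}$ with exactly the bilinear form $a$ of Lemma~\ref{lem:Vfreeb}. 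The only change in the coercivity estimate is an improvement: the Poincar\'e inequality is now available for \emph{both} components (each vanishes at one endpoint), so the cross term $X^2C(U^+-U^-)^2$ may simply be discarded and coercivity with constant $\underline\kappa/2$ follows from \fref{CD-prop}. Continuity of $a$ and boundedness of the load by the data are routine, and Lax--Milgram yields a unique weak solution.

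\emph{Pointwise bounds (the crux).} Writing $b^+:=\eta-\delta-(-\dot X)_+$ and $b^-:=-\delta-(-\dot X)_+$, one checks $b^+-b^-=\eta$, while \fref{V-ass1} gives $u_0^+\le b^+$ and $\dot X-u_1^-\le b^-$, so the Dirichlet data already respect the claimed upper bounds. I would therefore introduce the shifted unknowns $p:=V^+-b^+$ and $q:=V^--b^-$; because $b^+-b^-=\eta$, the coupling collapses and $(p,q)$ solves the cooperative system
\[
  \p_y(D^+\p_y p)=X^2C\,(p-q)\,,\qquad \p_y(D^-\p_y q)=X^2C\,(q-p)\,,
\]
with $p(0)\le0$, $q(1)\le0$ at the two Dirichlet ends and homogeneous Neumann conditions at the others. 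At an interior (or Neumann-boundary) maximum of $\max(p,q)$ of positive height $\mu$, the sign of the second derivative forces, through the respective equation, the two components to coincide there; this pushes the maximum onto the component whose Dirichlet datum is nonpositive, a contradiction. Hence $p,q\le0$, i.e. $V^+\le b^+$ and $V^-\le b^-$. The lower bounds follow symmetrically from $\tilde p:=\delta+\dot X_+-V^+$ and $\tilde q:=-\eta+\delta+\dot X_+-V^-$, using $a^+-a^-=\eta$ with $a^+:=\delta+\dot X_+$, $a^-:=-\eta+\delta+\dot X_+$; the consistency condition $|\dot X|\le\eta-2\delta$ implicit in \fref{V-ass1} keeps the target intervals nonempty.

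\emph{Derivative bounds and regularity.} Integrating the equation for $V^-$ from $y=0$, where $\p_yV^-=0$, yields
\[
  \p_yV^-(t,y)=\frac{X(t)^2}{D^-(t,y)}\int_0^y C(t,\tilde y)\bigl(\eta-V^++V^-\bigr)(t,\tilde y)\,d\tilde y\,,
\]
and analogously for $\p_yV^+$ (integrating from $y=1$). By \fref{CD-prop} and the $L^\infty$ bounds just obtained these are bounded by the data; solving the differential equation for $\p_y^2V^\pm$ and using $|\p_yD^\pm|\le c$ then gives the second-derivative bound by elliptic regularity, so $(V^+,V^-)\in L^\infty(0,T;W^{2,\infty}(0,1))^2$.

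I expect the coupled maximum principle to be the main obstacle. Because the problem is a $2\times2$ system, a naive componentwise argument closes into a loop of equalities rather than a strict sign, and the \emph{sharp} $\delta$-bounds needed downstream (to guarantee the correct sign of the $\rho^\pm$-characteristics in \fref{eq:rhofreeb}) rest on the exact algebra $b^+-b^-=a^+-a^-=\eta$ together with the strong maximum principle to rule out the degenerate equality case; treating the extrema that sit on the Neumann boundaries, via the Hopf lemma, is the technical point to be careful about.
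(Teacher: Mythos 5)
Your proposal is correct, and its skeleton (Lax--Milgram on the doubly-homogenized unknowns in $\{\phi^+(0)=\phi^-(1)=0\}$, then a maximum principle for the sharp $L^\infty$ bounds, then the integrated formula for $\p_yV^\pm$ and elliptic regularity for $\p_y^2V^\pm$) coincides with the paper's. The one step where you genuinely diverge is the mechanism behind the pointwise bounds \fref{V-est}. The paper never touches the coupled system directly: it freezes one velocity, applies the scalar maximum principle to each equation separately to get $\min\{V_0,\eta+\min V^-\}\le V^+\le\max\{V_0,\eta+\max V^-\}$ and its counterpart for $V^-$, observes that the box defined by \fref{V-est} is invariant under these two scalar solves (precisely because \fref{V-ass1} puts the Dirichlet data $V_0=u_0^+$, $V_1=\dot X-u_1^-$ inside the box), and concludes by running an alternating fixed-point iteration inside that invariant set, identifying its limit with the unique Lax--Milgram solution. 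You instead shift by $b^\pm$ with $b^+-b^-=\eta$ so the coupling collapses to the cooperative system $\p_y(D^\pm\p_y\,\cdot)=\pm X^2C(p-q)$ and run a vector-valued maximum principle, using the strong maximum principle and the Hopf lemma to break the loop of equalities $p(y_0)=q(y_0)=\mu$ that a naive componentwise argument produces. Your algebra checks out ($p(0)\le0$ and $q(1)\le0$ are exactly the two halves of \fref{V-ass1}, and likewise for the lower bounds), and you correctly flag the degenerate-equality and Neumann-extremum cases as the points needing care. What each route buys: yours is self-contained and avoids the paper's slightly glossed-over claim that the alternating iteration converges (the paper only asserts it \emph{could} be solved that way); the paper's is more elementary in that it only ever invokes the scalar maximum principle, at the price of an extra iteration-plus-uniqueness argument to transfer the invariant box to the actual solution. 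Either way the bounds are the same and feed correctly into Lemma \ref{lem:rhofixedb}.
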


\begin{proof}
As in Lemma \ref{lem:Vfreeb}, existence and uniqueness are a consequence of the Lax-Milgram lemma, now for the new unknowns
$$
  (W^+,W^-) := (V^+ - u^+_0, V^- - \dot X + u^-_1) \in \hat H_\partial\,,
$$
with 
$$
  \hat H_\partial := \{ \phi=(\phi^+,\phi^-)\in H^1(0,1)^2:\, \phi^+(0) = \phi^-(1) = 0\} \,.
$$
The necessary estimates are very similar and, thus, omitted.
On the other hand, with $V_0=u^+_0$, $V_1=\dot X - u^-_1$, the maximum principle implies
\begin{eqnarray*}
  && \min\left\{V_0(t), \eta + \min_{[0,1]} V^-(t,\cdot) \right\} \le V^+(t,y) \le \max\left\{ V_0(t), \eta + \max_{[0,1]} V^-(t,\cdot) \right\} \,,\\
  && \min\left\{ V_1(t), -\eta + \min_{[0,1]} V^+(t,\cdot) \right\} \le V^-(t,y) \le \max\left\{ V_1(t), -\eta + \max_{[0,1]} V^+(t,\cdot) \right\} \,,
\end{eqnarray*}
for $(t,y)\in\Omega_T$. These inequalities are consistent with \fref{V-est} since, by \fref{V-ass1}, \fref{V-est} is also satisfied by $(V_0,V_1)$. 
Thus, \fref{eq:ell} could also be solved by a fixed point iteration (in the subset of $L^\infty(0,1)^2$ defined by \fref{V-est}), where the problems 
for $V^+$ and $V^-$ are solved alternatingly, which proves \fref{V-est}. 
The statements about the derivatives are proven analogously to the proof of Lemma \ref{lem:Vfreeb}.
\end{proof}

The following result is concerned with the subproblem for the densities and corresponds to Lemma \ref{lem:rhofreeb} of the preceding section.
Here it also provides the basis of a global existence result under a smallness assumption on the data.

\begin{lem}\label{lem:rhofixedb}
Let the initial and boundary data for the densities, the bundle length, and the velocities satisfy \fref{rho-ass}, \fref{X-ass}, and, respectively, 
\fref{V-est}. Then \\
a) for $T>0$ small enough, the problem \fref{eq:rhofreeb} has a unique solution $\rho^\pm \in R_{\rho,T}$ 
with $\gamma>0$ depending on the data, \\
b) for arbitrary $T$ and for $X_0$ small enough, the problem \fref{eq:rhofreeb} has a unique solution $\rho^\pm$ satisfying
\begin{equation} \label{rho-est}
   \rho^\pm(t,y,l) \ge \alpha_T>0 \quad\mbox{for } l\le \underline L/2 \,,\qquad \rho^\pm \le \beta_T \,,\,\,|\p_y\rho^\pm| \le \gamma_T 
   \quad\mbox{in } \Omega_T\times (0,\infty)\,. 
\end{equation}
\end{lem}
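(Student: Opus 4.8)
The plan is to treat part a) exactly as Lemma \ref{lem:rhofreeb}, the only change being the verification of the sign of the $y$-characteristics. The transport speed in \fref{eq:rhofreeb} is $\frac{1}{X}(V^\pm-\dot X y)$, using that $V^-(y=1)+u_1^-=\dot X$ by the boundary condition in \fref{eq:ell}. The required inflow/outflow behaviour no longer needs a smallness-of-force assumption but follows directly from the velocity bounds \fref{V-est}: for the plus-characteristics one checks $V^+-\dot X y\ge\delta$ on $[0,1]$ (distinguishing the signs of $\dot X$ and using $y\le 1$), so they enter at $y=0$ and leave at $y=1$; for the minus-characteristics $V^-(y=1)-\dot X=-u_1^-\le-\delta$ and $V^-(y=0)\le-\delta-(-\dot X)_+<0$, so they enter at $y=1$ and leave at $y=0$. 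With these sign properties, and since $s_l>0$ makes the $l$-characteristics outgoing at $l=0$ (no boundary data required there, and the $l$-support shrinks), the problem is solved uniquely by the method of characteristics. Integrating $\frac{d}{dt}\rho^\pm=-\frac{1}{X}\rho^\pm\p_yV^\pm$ with $|\p_yV^\pm/X|\le c$ yields the bounds defining $R_{\rho,T}$ for $T\le\underline L/(2s_l)$ and $e^{cT}\le 2$; the bound on $\p_y\rho^\pm$ follows by differentiating the equation in $y$, using the inflow values of $\p_y\rho^\pm$ read off from the equation at $y=0$ (resp. $y=1$), and solving the resulting linear transport equation along characteristics.

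For part b), the key observation is that a short bundle forces every filament to cross it in bounded time. The plus-characteristic speed obeys $\frac{dy}{dt}=\frac{1}{X}(V^+-\dot X y)\ge\frac{\delta}{X}\ge\frac{\delta}{2X_0}$, and analogously for the minus-characteristics; hence the transit time from one end to the other is at most $\tau_*:=2X_0/\delta$. Equivalently, tracing any point $(t,y,l)$ backward along its characteristic, it reaches an entry point — either the inflow boundary or the initial surface $t=0$ — of age at most $\tau_*$. I would first fix $X_0$ small enough that $s_l\tau_*\le\underline L/2$, equivalently $X_0\le\delta\underline L/(4s_l)$; then for a target value $l\le\underline L/2$ the $l$-coordinate at the entry point is $l+s_l\tau\le\underline L$, where by \fref{rho-ass} the data $\rho_0^+,\rho_1^-,\rho_I^\pm$ are bounded below by $\alpha_0$.

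The crucial consequence is that all exponential-in-time factors appearing in part a) are now exponential only in the bounded transit time $\tau_*$, hence uniform in $T$. Integrating $\frac{d}{dt}\rho^\pm=-\frac{1}{X}\rho^\pm\p_yV^\pm$ over the transit gives $\alpha_0 e^{-c\tau_*}\le\rho^\pm\le\beta_0 e^{c\tau_*}$ on $\{l\le\underline L/2\}$, fixing $\alpha_T$ and $\beta_T$ independently of $T$. For $\p_y\rho^\pm$ one differentiates \fref{eq:rhofreeb} in $y$ to obtain $\frac{d}{dt}\p_y\rho^\pm=-\frac{1}{X}(2\p_yV^\pm-\dot X)\p_y\rho^\pm-\frac{1}{X}\rho^\pm\p_y^2V^\pm$ along characteristics, with coefficients bounded by Lemma \ref{lem:Vfixedb}; starting from the bounded inflow value of $\p_y\rho^\pm$ and running it over time $\le\tau_*$ yields a $T$-independent bound $\gamma_T$. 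Since none of these bounds deteriorate as $T\to\infty$, the local solution of part a) extends to all $T$, giving global existence; uniqueness is inherited from the characteristic construction.

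I expect the main obstacle to be precisely the lower bound $\rho^\pm\ge\alpha_T$, i.e. ruling out complete depolymerization of a filament family — the phenomenon identified in Section \ref{assumstrat} as the obstruction to global existence. The entire role of the smallness assumption on $X_0$ is to guarantee a transit time short enough that filaments reach the far boundary before their length support is eroded below $\underline L/2$; without it the entry $l$-value $l+s_l\tau$ would eventually exceed $\overline L$, where the data vanish. A secondary technical point is to check that the inflow boundary data for $\p_y\rho^\pm$, obtained from the equation by dividing by $V^+(y=0)=u_0^+\ge\delta$ and by $V^-(y=1)-\dot X=-u_1^-\le-\delta$, are well defined and bounded, which is exactly where \fref{V-ass1} is used.
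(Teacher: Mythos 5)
Your proposal is correct and follows essentially the same route as the paper: the sign and lower bound $|V^\pm-\dot X y|\ge\delta$ of the characteristic speeds derived from \fref{V-est}, part a) as in Lemma \ref{lem:rhofreeb}, and for part b) the bound $2X_0/\delta$ on the transit time combined with the condition $X_0\le\delta\underline L/(4s_l)$ so that the $l$-support is not eroded below $\underline L/2$ before a characteristic exits. Your additional observation that the resulting constants are uniform in $T$ (since all exponentials run only over the transit time) is a correct, slightly sharper reading of what the paper's argument actually yields.
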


\begin{proof}
If $V^+-\dot Xy>0$ for all $y\in[0,1]$ holds, it is guaranteed that the $y$-characteristics for the plus family are ingoing at the left boundary and outgoing at the right. Indeed, by \fref{V-est},
\[
  V^+-\dot Xy\ge \delta +\dot X_+-(\dot X_+-(-\dot X)_+)y = \delta +\dot X_+(1-y)+(-\dot X)_+y \ge \delta \,.
\]
Analogously, $V^- - \dot X y \le -\delta$ is proven. \\
a) For small $T$, the rest of the proof is as in Lemma \ref{lem:rhofreeb}. \\
b) Since, by the above estimates and by \fref{X-ass}, $\delta/(2X_0)$ is the minimum $y$-speed of the characteristics, the maximal life time of a characteristic before it leaves the bundle (i.e. the $y$-interval $[0,1]$) is $2X_0/\delta$. By \fref{rho-ass}, the initial and boundary data for 
the densities are bounded away from zero for $l\in [0,\underline L]$. The right end of this interval moves to $\underline L - s_l\tau$ on a characteristic
with life time $\tau$, and it thus stays above $\underline L/2$ for
\[
  X_0 \le \frac{\delta \underline L}{4s_l} \,,
\]
completing the proof of the first estimate in \fref{rho-est}, since the sign of the densities is preserved along characteristics. The other estimates
are proved as in Lemma \ref{lem:rhofreeb}.
\end{proof}

For the local existence result we again need Lipschitz continuity. The following result can be proved analogously to Lemma \ref{lem:Lipfreeb} and the proof 
is therefore omitted.

\begin{lem}\label{lem:Lipfixedb}
a) Let $(D^\pm_1,C_1)$ and $(D^\pm_2,C_2)$ satisfy the assumptions of Lemma \ref{lem:Vfixedb}. Then there exists 
a constant $c>0$ depending on the data, such that the corresponding solutions $V^\pm_1$ and $V^\pm_2$ of \fref{eq:ell} satisfy
\begin{eqnarray*}
  && \|\Delta V^+\|_{\rm vel}+\|\Delta V^-\|_{\rm vel}  \le c\left( \|\Delta C\|_{\rm coeff} + 
     \|\Delta D^+\|_{\rm coeff} + \|\Delta D^-\|_{\rm coeff}\right) \,.
\end{eqnarray*}
b) Let the assumption of Lemma \ref{lem:rhofixedb} hold for $V^\pm_1$ and $V^\pm_2$ and let $T$ be small enough in the sense of Lemma
\ref{lem:rhofixedb} a). Then there exist constants $c_1,c_2>0$, such that 
the corresponding solutions $\rho^\pm_1$ and $\rho^\pm_2$ of \fref{eq:rhofreeb} satisfy
$$
    \|\Delta\rho^+\|_{\rm density} + \|\Delta\rho^-\|_{\rm density} \le c_1\left(e^{c_2T}-1\right) 
  \left( \|\Delta V^+\|_{\rm vel} + \|\Delta V^-\|_{\rm vel}  \right) \,.
$$
\end{lem}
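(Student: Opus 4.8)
The plan is to follow the proofs of Lemma~\ref{lem:Lipfreeb}~b) and d) verbatim, exploiting two simplifications of the present setting. First, since $X$ is prescribed and identical for the two solutions, every term involving $\dx$ or $\Delta\dot X$ drops out; this is precisely why the bounds in Lemma~\ref{lem:Lipfixedb} carry no $\|\dx\|_{\rm length}$ on their right-hand sides. Second, the inhomogeneous Neumann (force) condition at $y=1$ has been replaced by the Dirichlet condition $V^-(y=1)=\dot X-u_1^-$, so the lift used to homogenize $V^-$ is the $y$-independent quantity $\dot X-u_1^-$ rather than the coefficient-dependent integral $FX\int_y^1\tilde y/D^-\,d\tilde y$ of the preceding section. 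Consequently the delicate $\int_y^1$-terms of the functional $\overline G$ in Lemma~\ref{lem:Lipfreeb}~b) are absent here, and the proof is strictly easier than its free-boundary counterpart.

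For part~a) I would introduce the homogenized unknowns $(W_i^+,W_i^-):=(V_i^+-u_0^+,\,V_i^--\dot X+u_1^-)\in\hat H_\partial$ of Lemma~\ref{lem:Vfixedb}. Since the data $u_0^+$ and $\dot X-u_1^-$ coincide for both solutions, $\dev^+(0)=\dev^-(1)=0$, so $\dev^\pm=\Delta W^\pm\in\hat H_\partial$. Subtracting the two weak formulations and writing $a_2(V_2,\phi)=a_1(V_2,\phi)-(a_1-a_2)(V_2,\phi)$, the difference solves $a_1(\dev,\phi)=\overline G(\phi)$ for all $\phi\in\hat H_\partial$, where $a_1$ is the coercive bilinear form of Lemma~\ref{lem:Vfixedb} built from $(D_1^\pm,C_1)$ and
\[
  \overline G(\phi)=\int_0^1\Bigl(X^2\eta\,\Delta C\,(\phi^+-\phi^-)-\Delta D^+\p_yV_2^+\p_y\phi^+-\Delta D^-\p_yV_2^-\p_y\phi^- -X^2\Delta C\,(V_2^+-V_2^-)(\phi^+-\phi^-)\Bigr)dy \,.
\]
Using the uniform bounds on $V_2^\pm$ and $\p_yV_2^\pm$ from Lemma~\ref{lem:Vfixedb}, the bound on $X$ from \fref{X-ass}, and the Cauchy--Schwarz inequality, one obtains $\|\overline G\|_{\hat H_\partial'}\le c(\|\Delta C\|_{\rm coeff}+\|\Delta D^+\|_{\rm coeff}+\|\Delta D^-\|_{\rm coeff})$. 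Testing with $\phi=\dev$ and invoking coercivity of $a_1$ on $\hat H_\partial$ (here Poincar\'e is available for \emph{both} components, since $\phi^+(0)=\phi^-(1)=0$, so the elementary inequality of Lemma~\ref{lem:Vfreeb} is not even needed) yields the asserted $H^1$, hence $\|\cdot\|_{\rm vel}$, estimate.

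For part~b) I would reproduce the energy identity \fref{eq:deltarho}, dropping the superscript $\pm$. Because $X$ and $\dot X$ agree for the two solutions, the source term collapses to $A=-\frac{\p_y\rho_2}{X}\dev-\frac{\rho_2}{X}\p_y\dev$, so that $|2\int A\,\dr|$ is controlled by $c(\|\dev^+\|_{H^1}+\|\dev^-\|_{H^1})\|\dr\|_{L^2}$ using the boundedness of $\rho_2$, $\p_y\rho_2$, and $1/X$ together with the compact $l$-support of $\dr$. The remaining boundary and $l=0$ contributions are nonpositive exactly as before; this is the one step that genuinely uses the hypotheses, namely the characteristic sign conditions $V_1^+-\dot Xy\ge\delta$ and $V_1^--\dot Xy\le-\delta$ of Lemma~\ref{lem:rhofixedb} (which force $\dr$ to vanish at the respective inflow boundaries) and $s_l>0$. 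A Gronwall argument with $\dr(t=0)=0$ then gives $\|\dr\|_{\rm density}\le c_1(e^{c_2T}-1)(\|\dev^+\|_{\rm vel}+\|\dev^-\|_{\rm vel})$. I do not expect a serious obstacle: both simplifications make the estimates easier than in Section~\ref{fbvp}, and the only point requiring care is the bookkeeping of the coefficient-difference terms in $\overline G$, each of which must be matched against an $L^2$-norm of $\Delta C$ or $\Delta D^\pm$ (via Cauchy--Schwarz against the bounded reference derivatives) rather than an $L^\infty$-norm.
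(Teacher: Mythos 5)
Your proposal is correct and is exactly the argument the paper intends: the paper omits this proof as ``analogous to Lemma \ref{lem:Lipfreeb},'' and you have carried out that analogy faithfully, correctly identifying the two simplifications (no $\Delta X$ terms since $X$ is prescribed, and a constant lift for the Dirichlet condition at $y=1$ in place of the coefficient-dependent Neumann lift) and the resulting form of the functional $\overline G$ and of the source term $A$. No gaps.
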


The existence results will again be written for the problem in the original scaling.

\begin{thm}\label{thm:loc-ex}
 Let $C_0,D_0,\eta,s_l,X_0>0$, \fref{rho-ass}, \fref{X-ass}, and \fref{V-ass1} hold. Then for $T>0$ small enough the problem 
\fref{eq:prob}--\fref{eq:coeff} has a unique solution 
$$
  (\rho^+,\rho^-,V^+,V^-) \in L^\infty\left(0,T;\, W^{1,\infty}_x(L^\infty_l)^2\times (W^{2,\infty}_x)^2 \right) \,,
$$
such that $\rho^+,\rho^-> 0$ are uniformly bounded away from zero in $\{(t,x,l):\, 0<t<T,\,0<x<X(t),\,0<l<\underline L/2\}$,
$V^+ > \max\{0,\dot X\}$, and $V^- < \min\{0,\dot X\}$. 
\end{thm}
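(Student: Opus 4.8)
The plan is to reproduce the fixed-point argument used for the free boundary problem at the end of Section~\ref{fbvp}, but with the essential simplification that $X$ is now prescribed via \fref{X-ass} and is no longer an unknown. Consequently the iteration acts on the pair of densities alone, i.e.\ on $(R_{\rho,T})^2$ rather than on $(R_{\rho,T})^2\times R_{X,T}$. First I would pass to the fixed domain $\Omega_T$ through the scaling \fref{eq:scaling}, turning \fref{eq:prob}--\fref{eq:coeff} into the system \fref{eq:coeff-y}, \fref{eq:ell}, \fref{eq:rhofreeb}. The fixed-point operator $\mathcal{F}$ is then defined in three stages: given $(\rho^+,\rho^-)\in(R_{\rho,T})^2$, compute the coefficients $C,D^\pm$ from \fref{eq:coeff-y}, solve the elliptic system \fref{eq:ell} for the velocities $V^\pm$, and finally solve the transport equations \fref{eq:rhofreeb} to produce a new pair $(\rho^+,\rho^-)$.

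Showing that $\mathcal{F}$ is a self-map of $(R_{\rho,T})^2$ is the first substantive step, and here the preceding lemmas do the work. Lemma~\ref{lem:CDfreeb}, whose proof uses only the moment bounds and is therefore unchanged, supplies the uniform ellipticity \fref{CD-prop}; Lemma~\ref{lem:Vfixedb} then yields a unique $(V^+,V^-)\in L^\infty(0,T;W^{2,\infty}(0,1))^2$ together with the sign and derivative bounds \fref{V-est}; and Lemma~\ref{lem:rhofixedb}~a) shows that for $T$ small enough the output densities again lie in $R_{\rho,T}$. The crucial mechanism, exactly as in the free-boundary case, is that the sign information in \fref{V-est} forces the correct inflow/outflow geometry of the $y$-characteristics (ingoing at $y=0$ for the plus-family, at $y=1$ for the minus-family), so that the method of characteristics applies and the lower bound $\rho^\pm\ge\alpha_0/2$ for $l\le\underline L/2$ is preserved on the short interval $T\le\underline L/(2s_l)$.

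Next I would verify that $\mathcal{F}$ is a contraction in the weaker $L^\infty_t(L^2_{y,l})^2$-topology. Chaining the Lipschitz estimates of Lemma~\ref{lem:Lipfreeb}~a) (coefficients controlled by densities, again unchanged since \fref{eq:coeff-y} is the same), Lemma~\ref{lem:Lipfixedb}~a) (velocities controlled by coefficients, with no $\Delta X$ term because $X$ is fixed), and Lemma~\ref{lem:Lipfixedb}~b) (densities controlled by velocities), the composite map obeys a self-referential bound of the form $\|\Delta\rho^+\|_{\rm density}+\|\Delta\rho^-\|_{\rm density}\le c\,(e^{c_2T}-1)\bigl(\|\Delta\rho^+\|_{\rm density}+\|\Delta\rho^-\|_{\rm density}\bigr)$ between input and output differences. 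Since $c\,(e^{c_2T}-1)\to 0$ as $T\to 0^+$, the factor is strictly below $1$ for small $T$; taking $T$ to be the minimum of the thresholds required for the self-map property and for contraction, the Banach fixed-point theorem produces a unique fixed point, which is the unique solution of the scaled system and hence, undoing \fref{eq:scaling}, of \fref{eq:prob}--\fref{eq:coeff}. The stated regularity $W^{1,\infty}_x(L^\infty_l)^2\times(W^{2,\infty}_x)^2$ then transfers back from the $R_{\rho,T}$ bound $|\p_y\rho^\pm|\le\gamma$ and from Lemma~\ref{lem:Vfixedb}, and the strict sign conditions $V^+>\max\{0,\dot X\}$, $V^-<\min\{0,\dot X\}$ are the strict reading of \fref{V-est}.

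I expect the main obstacle to be not any individual estimate, which are all delivered by the lemmas, but the functional-analytic bookkeeping that reconciles the two different topologies: $\mathcal{F}$ is a self-map on the set $(R_{\rho,T})^2$ defined by the strong $L^\infty$-type bounds of $R_{\rho,T}$, yet it is only a contraction in the weaker $L^2_{y,l}$-based metric. For Banach's theorem to apply one must check that $(R_{\rho,T})^2$ is a complete metric space under this weaker metric, i.e.\ that the defining constraints ($\alpha_0/2\le\rho$ on $l\le\underline L/2$, $\rho\le 2\beta_0$, $\rho=0$ for $l\ge\overline L$, and $|\p_y\rho|\le\gamma$) are closed under $L^2$-limits; the pointwise bounds pass through along an a.e.-convergent subsequence, and the derivative bound $|\p_y\rho|\le\gamma$ survives by weak lower semicontinuity, so the limit indeed inherits the full $R_{\rho,T}$ membership. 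This is the same mismatch handled implicitly in Section~\ref{fbvp}, and spelling it out is what makes the contraction argument rigorous here.
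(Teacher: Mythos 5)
Your proposal follows essentially the same route as the paper: a fixed-point iteration on $(R_{\rho,T})^2$ whose self-map property comes from Lemmas \ref{lem:CDfreeb}, \ref{lem:Vfixedb}, and \ref{lem:rhofixedb}~a), and whose contractivity in the $L^\infty_t(L^2_{y,l})$-topology comes from chaining Lemma \ref{lem:Lipfreeb}~a) with Lemma \ref{lem:Lipfixedb}. Your explicit remark on the completeness of $(R_{\rho,T})^2$ under the weaker metric is a welcome addition that the paper leaves implicit, but it does not change the argument.
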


\begin{proof}
Lemmas \ref{lem:CDfreeb}, \ref{lem:Vfixedb}, \ref{lem:rhofixedb} show that for small enough $T$ the fixed point operator maps $R_\rho^2$ into itself
and that the solution components have the stated properties. 
By Lemmas \ref{lem:Lipfreeb} a) and \ref{lem:Lipfixedb} the fixed point map is a contraction with respect to the 
$L^\infty_t(0,T;\, L_{y,l}^2((0,1)\times(0,\infty)))$-topology. 
\end{proof}

The only obstacle for the global existence of solutions is the possibility that all filaments, pushed in at one end
of the bundle, get completely depolymerized before reaching the other end. Lemma \ref{lem:rhofixedb} b) shows that this can be avoided
in short enough bundles.

\begin{thm}
Let the assumptions of Theorem \ref{thm:loc-ex} hold. Then, for $X_0$ small enough the problem 
\fref{eq:prob}--\fref{eq:coeff} has a unique global solution.
\end{thm}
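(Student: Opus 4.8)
The plan is to show that the local solution constructed in Theorem \ref{thm:loc-ex} can be continued indefinitely in time, i.e.\ that the local existence time $T$ can be taken uniform and the solution re-started at $T$ without degradation of the constants. The only mechanism that could prevent continuation is the loss of ellipticity in \fref{eq:ell}, which occurs precisely when $\mu_1^\pm$ (and hence $D^\pm$) degenerates because the densities $\rho^\pm$ vanish on a set of positive measure in $l\le\underline L/2$. The key observation, already isolated in Lemma \ref{lem:rhofixedb} b), is that the smallness assumption $X_0\le \delta\underline L/(4s_l)$ furnishes a \emph{time-independent} lower bound $\rho^\pm\ge\alpha_T$ on $\{l\le\underline L/2\}$, because every characteristic leaves the bundle after a life time at most $2X_0/\delta$, which is too short for a filament entering with length in $[0,\underline L]$ to depolymerize below $\underline L/2$.

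First I would verify that, under \fref{X-ass} and \fref{V-ass1}, all the constants produced by Lemmas \ref{lem:CDfreeb}, \ref{lem:Vfixedb}, and \ref{lem:rhofixedb}~b) are independent of the base time and of the starting point along the time axis. Concretely: the moment bounds in Lemma \ref{lem:CDfreeb} depend only on $\alpha_0,\beta_0,\underline L,\overline L$, hence $\underline\kappa,\overline\kappa$ are uniform; the velocity bounds in Lemma \ref{lem:Vfixedb} follow from the maximum principle and depend only on $\eta,\delta$ and the data; and, crucially, the lower bound $\alpha_T$ and the upper bounds $\beta_T,\gamma_T$ in \fref{rho-est} depend only on the data and on $X_0$ through the smallness condition, not on $T$. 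Because the continuation step must preserve membership in the solution class, I would note that at each restart time $t_0$ the new ``initial data'' $\rho^\pm(t_0,\cdot,\cdot)$ inherit the bounds \fref{rho-est} and the gradient bound $|\p_y\rho^\pm|\le\gamma_T$, and that the compatibility conditions in \fref{rho-ass} were only needed to bound $\p_y\rho^\pm$ near the inflow boundary, which remains controlled by the boundary-derivative formula in the proof of Lemma \ref{lem:rhofreeb}.

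The continuation argument itself is a standard bootstrap: by Theorem \ref{thm:loc-ex} there is a maximal existence time $T^\ast\in(0,\infty]$; if $T^\ast<\infty$, then I would apply the local theorem again with initial time $t_0=T^\ast-T/2$ and the same uniform $T$ (legitimate because the restart data satisfy the hypotheses with the same constants), producing a solution on $[t_0,t_0+T]\supsetneq[t_0,T^\ast]$ and contradicting maximality. Uniqueness is inherited on each finite subinterval from the contraction property in Theorem \ref{thm:loc-ex}, and the local uniqueness statements patch together to give global uniqueness.

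The main obstacle, and the step I would treat most carefully, is confirming that the crucial lower bound $\rho^\pm\ge\alpha_T$ does not deteriorate as the restart time advances. This is exactly where the shortness of the bundle is essential: the life time $2X_0/\delta$ of a characteristic is bounded \emph{uniformly in $t$}, so at every restart the freshly inflowing material (with density $\ge\alpha_0$ for $l\le\underline L$) has not yet depolymerized below $\underline L/2$, and since the sign and magnitude are propagated along characteristics with the multiplicative factor $e^{cT}\le 2$ controlled by $|\p_yV^\pm/X|\le c$, the bound $\alpha_T$ is regenerated at each stage rather than accumulating a decaying product over successive intervals. Verifying that no point of $[0,1]$ can be reached only by characteristics older than $2X_0/\delta$ — equivalently, that the union of the supports of $D^+(t,\cdot)$ and $D^-(t,\cdot)$ always covers $[0,1]$ with the required overlap — is the heart of the matter and follows from the uniform sign conditions $V^+-\dot Xy\ge\delta$ and $V^--\dot Xy\le-\delta$ established in Lemma \ref{lem:rhofixedb}.
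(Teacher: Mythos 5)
Your proposal is correct and takes essentially the same route as the paper: the paper's proof is a one-line invocation of Lemma \ref{lem:rhofixedb}~b), which for small $X_0$ propagates the qualitative properties \fref{rho-ass} of the data for arbitrary times, permitting continuation of the local solution. Your write-up merely makes explicit the uniformity of the constants and the standard restart/bootstrap argument that the paper leaves implicit, with the same key mechanism (characteristic life time at most $2X_0/\delta$ preventing complete depolymerization).
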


\begin{proof}
By Lemma \fref{lem:rhofixedb} b) the qualitative properties of the initial data $\rho^\pm_I$, requested in \fref{rho-ass}, are propagated
for arbitrary times, permitting continuation of the local solution.
\end{proof}

\section{Asymptotics for small prescribed force}

In this section we show that it seems possible to improve the results of Section \ref{fbvp}.
For small force and time independent data, global solvability and convergence
to a steady state can be expected.

We assume time independent boundary data and prescribed force, 
i.e. $u_0^+(t)=u_l$, $u_1^-(t) = u_r$, $\rho_0^+(t,l) = \rho_l(l)$, $\rho_1^-(t,l) = \rho_r(l)$, 
and $F =\,$const. Furthermore, the force is assumed to be pulling, $F>0$, and the force free bundle to be contractive, $\eta> u_l+u_r$. 

With a small parameter $\eps>0$, we introduce the small force rescaling
$$
  F\to \eps F\,,\qquad X\to \eps X\,,\qquad x\to \eps x\,,\qquad t\to\eps t \,,
$$
expressing the expectation of a small bundle length of the same order of magnitude
as the force and of an accordingly faster relevant time scale. The rescaled problem
for the velocities with prescribed force reads
\begin{eqnarray}\label{eq:V-eps}
\begin{array}{ l}     
    \p_x\left(D^\pm\p_xV^\pm\right) \pm \eps^2 C \left(\eta - V^+ + V^-\right) = 0 \,,\\
    V^+(t,0)= u_l \,,\qquad \p_xV^+(t,X(t))=0 \,,\\
    \p_xV^-(t,0)=0 \,,\qquad  (D^-\p_xV^-)(t,X(t)) = \eps^2 F \,.
\end{array}
\end{eqnarray}
The dynamics of the bundle length is then determined by
\begin{equation}\label{X-eps}
\dot X(t) = V^-(t,X(t)) + u_r \,.
\end{equation}
We shall pass to the limit $\eps\to 0$ formally and denote formal limits by the subscript $0$.
Obviously, the formal limit of \eqref{eq:V-eps} has the solution
$V^+_0 = u_l$, $V^-_0 = V^-_0(t)$ with the latter to be determined by integration 
of the differential equation for $V^-$, substitution of the last boundary condition,
 division by $\eps^2$, and passing to the limit:
$$
  V^-_0(t) = \frac{F}{\int_0^{X_0(t)} C_0(t,x)dx} + u_l - \eta \,.
$$
The formal limit
\begin{eqnarray}\label{eq:rho-lim}
\begin{array}{ l}     
    \p_t\rho^\pm_0 + V^\pm_0\p_x\rho^\pm_0 = 0 \,,\\
    \rho^+_0(t,0,l)= \rho_l(l) \,,\qquad \rho^-_0(t,X(t),l)=\rho_r(l) \,,
\end{array}
\end{eqnarray}
of the equations for the densities has -- after finite time -- the solution $\rho^+_0= \rho_l$, $\rho^-_0=\rho_r$,
producing a constant coefficient $C_0$ and thus $V^-_0 = \frac{F}{C_0X_0} + u_l - \eta$. The limiting dynamics of the bundle length is therefore given by the ODE
$$
  \dot X_0 = u_l+u_r-\eta + \frac{F}{C_0X_0}
$$
with the stable positive steady state
$$
  X_\infty = \frac{F}{C_0(\eta - u_l - u_r)} \,.
$$
We conjecture that the asymptotics of this section can be made rigorous in a rather straightforward
way by tracking the dependence on $\eps$ in the estimates of Section \ref{fbvp}.
The convergence to equilibrium will not hold for large forces, which might rip the bundle apart.
\par\bigskip\noindent
\textbf{Email adresses}: \\
stefanie.hirsch$@$univie.ac.at,\\
dietmar$@$cims.nyu.edu,\\
christian.schmeiser$@$univie.ac.at.


\begin{thebibliography}{99}

\bibitem{Carvalho} A. Carvalho, A. Desai, K. Oegema, Structural Memory in the Contractile Ring Makes the Duration of Cytokinesis Independent of Cell Size, ``Cell``, 137(5) (2009), 926--937.
\bibitem{Courson} D.S. Courson, R.S. Rock, Actin cross-link assembly and disassembly mechanics for $\alpha$-actinin and fascin, ''J. Biol. Chemistry``, 285 (2010), 26350--26357.
\bibitem{Huxley} A.F. Huxley, Muscular contraction, ''J. Physiology``, 243 (1974), 1--43.
\bibitem{Jayo} A. Jayo, M. Parsons, Fascin: A key regulator of cytoskeletal dynamics, ''Int. J.  Biochem. Cell Biol.`` , 42(10) (2010), 1614--1617.
\bibitem{MilOel} V. Milisic, D. \"Olz, On the asymptotic regime of a model for friction mediated by transient elastic linkages, ''J. Math. Pures Appl.``, 96(5) (2011), 484--501.
\bibitem{Oelz} D. \"Olz, A viscous two-phase model for contractile actomyosin bundles, ''Math. Biol.``, 68 (2013), 1653--1676.
\bibitem{OelSch} D. \"Olz, C. Schmeiser, Derivation of a model for symmetric lamellipodia with instantaneous cross-link turnover, ''Arch. Rational Mech. Anal.``, 198(3) (2010), 963--980.
\bibitem{Small} J.V. Small, K. Rottner, I. Kaverina, K.I. Anderson, Assembling an actin cytoskeleton for cell attachment and movement, ''Biochimica et Biophysica Acta - Molecular Cell Research``, 1404 (1998), 271--281.
\bibitem{Svitkina} T.M. Svitkina, A.B. Verkhovsky, K.M. McQuade, G.G. Borisy, Analysis of the actin-myosin II system in fish epidermal keratocytes: Mechanism of cell body translocation, ''J. Cell Biol.``, 139 (2) (1997), 397--415. 
\bibitem{Vicente} M. Vicente-Manzanares, X. Ma, R.S. Adelstein, A.R. Horwitz, Non-muscle myosin II takes centre stage in cell adhesion and migration, ''Nature Rev. Mol. Cell Biol.``, 10 (2009), 778--790.
\end{thebibliography}
\end{document}